\documentclass[pdflatex,sn-mathphys-num]{sn-jnl}

\usepackage{graphicx}%
\usepackage{multirow}%
\usepackage{amsmath,amssymb,amsfonts}%
\usepackage{amsthm}%
\usepackage{mathrsfs}%
\usepackage[title]{appendix}%
\usepackage{xcolor}%
\usepackage{textcomp}%
\usepackage{manyfoot}%
\usepackage{booktabs}%
\usepackage{algorithm}%
\usepackage{algorithmicx}%
\usepackage{algpseudocode}%
\usepackage{listings}%

\usepackage{amsmath}
\usepackage{amsthm}
\usepackage{amssymb}

\usepackage[utf8]{inputenc}
\usepackage{hyperref}
\usepackage{subcaption}
\hypersetup{
	unicode,
	pdfauthor={Author One, Author Two, Author Three},
	pdftitle={A simple article template},
	pdfsubject={A simple article template},
	pdfkeywords={article, template, simple},
	pdfproducer={LaTeX},
	pdfcreator={pdflatex}
}

\theoremstyle{plain}
\newtheorem{theorem}{Theorem}
\newtheorem{corollary}[theorem]{Corollary}
\newtheorem{lemma}[theorem]{Lemma}

\theoremstyle{definition}
\newtheorem{definition}[theorem]{Definition}

\usepackage{graphicx, color}
\graphicspath{{../figures/}}

\usepackage{algorithm, algpseudocode} 
\usepackage{mathrsfs} 

\usepackage{makecell}

\usepackage{nicematrix}
\usepackage{blkarray}
\usepackage{xspace}

\newcommand{\bzero}{\Block{2-2}<\LARGE>{0}}

\newcommand{\ode}[1]{$\mathcal{O} ({#1})$}

\newcommand{\fpc}{Four-point-condition\xspace}
\newcommand{\sfpc}{Strong-four-point-condition\xspace}

\newcommand{\dd}{\ensuremath{\rho}\xspace}
\newcommand{\sd}{\ensuremath{\hat{\rho}}\xspace}
\newcommand{\centers}{\ensuremath{C}\xspace}
\newcommand{\patas}{\ensuremath{l}\xspace}
\newcommand{\vertebras}{\ensuremath{h}\xspace}

\def\metspace{\mathcal{X}}

\DeclareMathOperator*{\LC}{L}
\DeclareMathOperator*{\RC}{R}

\newcommand{\CC}{\ensuremath{\mathcal{C}}\xspace}
\newcommand{\QQ}{\ensuremath{\mathcal{Q}}\xspace}

\newcommand{\RR}{\ensuremath{\mathbb{R}}\xspace}
\newcommand{\xx}{\ensuremath{\mathbf{x}}\xspace}

\pacs[MSC Classification]{68R12, 90C90, 52-08}

\begin{document}

\title[Robinson spaces and their representation in low-dimensional metric spaces]{Robinson spaces and their representation in low-dimensional metric spaces}


\author*[1]{\fnm{Francisco} \sur{Arrepol} 
} \email{farrepol2016@udec.cl}
\equalcont{These authors contributed equally to this work.}

\author[2]{\fnm{Mauricio} \sur{Soto-Gomez}
}\email{mauricio.soto@unimi.it}
\equalcont{These authors contributed equally to this work.}

\author[3]{\fnm{Christopher} \sur{Thraves Caro} 
}\email{cthraves@udec.cl}
\equalcont{These authors contributed equally to this work.}

\affil[1]{\orgdiv{Departamento de Inform\'atica y Ciencias de la Computaci\'on, Facultad de Ingenier\'ia}, \orgname{Universidad de Concepci\'on}, 
\state{Concepci\'on}, \country{Chile}}

\affil[2]{\orgdiv{AnacletoLab, Department of Computer Science}, \orgname{University of Milan}, \orgaddress{\state{Milan}, \country{Italy}}}

\affil[3]{\orgdiv{ Departamento de Ingenier\'ia Matem\'atica, Facultad de Ciencias F\'isicas y Matem\'aticas}, \orgname{Universidad de Concepci\'on}, \orgaddress{\state{Concepci\'on}, \country{Chile}}}


\abstract{Robinson spaces are structures equipped with a total order that encodes comparative dissimilarity relationships. 
 We study the problem of representing Robinson dissimilarity spaces into low-dimensional metric spaces.
These representations aim to preserve the relative dissimilarity relationships between elements rather than their exact values. While low dimensional Euclidean spaces such as \( \mathbb{R}^1 \) and \( \mathbb{R}^2 \) are natural candidates for such embeddings, previous work has shown that not all Robinson spaces admit a valid embedding in the real line that respects their structural constraints. Motivated by this limitation, we explore the broader class of \emph{real trees}, which retain low-dimensional interpretability while allowing greater flexibility.

To address the embedding problem, we develop two key tools: a combinatorial representation of Robinson spaces and a topological characterization of \emph{caterpillars}, a restricted class of real trees. These tools enable a  formulation of the embedding problem as a linear program, providing both computational and theoretical insights.
We prove that some subclasses of Robinson spaces always admit embeddings in a caterpillar, and we establish the existence of Robinson spaces that cannot be embedded in any real tree. These results clarify the geometric limitations of representing ordered dissimilarity structures and open new directions for studying the interaction between dissimilarity, order, and metric geometry.}

\keywords{Robinson spaces, dissimilarity spaces, metric representation, valid drawing, caterpillars.}

\maketitle

\section{Introduction}\label{sec1}
\label{sec:intro}

A \emph{dissimilarity measure} between two objects is a numerical value that indicates how different the two objects are from one another: the smaller the value, the more similar the objects. In the case when the dissimilarity is zero, the two objects are identical. A \emph{dissimilarity space} consists of a set of elements along with a dissimilarity measure for each pair of distinct elements within that set. Dissimilarity representation is crucial in pattern recognition because it effectively captures structural and relational information between samples \cite{rezazadeh2021dissimilarity}.

In this work, we focus on \emph{Robinson spaces}, a special class of dissimilarity spaces characterized by the existence of a linear ordering of the elements  \(<\) such that, for any three elements \( x < y < z \), the dissimilarity between \( x \) and \( z \) is at least as large as both the dissimilarity between \( x \) and \( y \), and between \( y \) and \( z \). This combinatorial property allows to infer similarities directly from the ordering, and is key to the structural regularity of these spaces.

A dissimilarity space is not necessarily a metric space, as a dissimilarity measure need not satisfy all the properties of a distance (the triangle inequality, in particular, may not hold) even in Robinson spaces. However, representing dissimilarity spaces via the embedding of their elements into a metric space has proven useful in various fields, such as pattern recognition, and classification \cite{costa2020dissimilarity}.

The structural regularity of Robinson spaces leads us to conjecture that they can be efficiently represented in low-dimensional metric spaces. Since an exact preservation of dissimilarities is generally unattainable, we focus instead on preserving the \emph{relative} ordering induced by the dissimilarity values. That is, we aim to construct embeddings where, from the perspective of any given element, the closer of two others in the dissimilarity sense is also closer in the metric space. This kind of representation enables external observers to both visually and computationally recognize structural patterns and relationships present in the space.

In this context, a \emph{low-dimensional metric space} refers to a Euclidean space \(\mathbb{R}^k\), where \(k\) is small, typically \(k \in \{1, 2\}\). These low values of \(k\) facilitate geometric or visual interpretation of the space's structure. However, Aracena et al. in~\cite{weighted-line-scfe} showed that not all Robinson spaces admit an embedding in \(\mathbb{R}^1\) that preserves the desired relational structure, which motivates the exploration of more flexible low-dimensional settings.

A natural generalization of \(\mathbb{R}^1\) is the class of \emph{real trees}, which are connected, geodesic metric spaces without cycles. These spaces preserve the notion of low dimensionality in terms of Hausdorff dimension (see~\cite{makoto2005hausdorff, Schleicher01062007}) while enabling a wider variety of embeddings. Importantly, they retain interpretability and structure, making them suitable for representing hierarchical or branching relationships.

In this work, we study the problem of finding an embedding of the elements of a Robinson space into metric spaces while preserving the underlying relational structure, i.e., the comparative dissimilarities encoded by a compatible linear order. We show that this problem can be formulated as a linear program, and prove that specific subclasses of Robinson spaces always admit such embeddings. However, we also show that there exist Robinson spaces that do not admit any embedding of this kind into a real tree.

\section{Definitions, notation, and our results}
\label{sec:definitions}

Let \( S \) be a finite set. A \emph{dissimilarity} on \( S \) is a function \( \dd: S \times S \rightarrow \mathbb{R}_{\geq 0} \) such that \( \dd(x, y) = \dd(y, x) \) for all \( x, y \in S \), and \( \dd(x, y) = 0 \) if and only if \( x = y \). The pair \( (S, \dd) \) is called a \emph{dissimilarity space}. We say that a dissimilarity space \( (S, \dd) \) is \emph{strict} if \(\dd(x,y)\neq \dd(u,v)\) for any two distinct pairs \(\{x,y\}\neq\{u,v\}\), not both in the diagonal (i.e., \(x\neq y\) or \(u\neq v\)).

\begin{definition}[Robinson Space]\label{def:RobinsonSpace}
    A dissimilarity space \( (S, \dd) \) is \emph{Robinson} if there exists a total order \( < \) on \( S \) such that, for every triple \( x < y < z \) in \( S \), the following inequality holds:
    \[
        \dd(x, z) \geq \max\{\dd(x, y), \dd(y, z)\}.
    \]
    Such an order is called a \emph{compatible order}.
\end{definition}

In the following, we assume that finite Robinson spaces are of the form \( S=\{1, 2, \dots, n\} \) and that the identity order \( 1 < 2 < \cdots < n \) is compatible.

To geometrically represent dissimilarity spaces, we use the notion of a \emph{valid drawing}, which refers to an embedding into a metric space that preserves the relative similarity structure among the elements.
\begin{definition}[Valid Drawing]\label{def:ValidDrawing}
  Let \( (S, \dd) \) be a dissimilarity space and \( (\metspace, d) \) be a metric space.
  An injection \( I: S \rightarrow \metspace \) is a \emph{valid drawing} if for all \( x, y, z \in S \) such that \( \dd(x, y) < \dd(x, z) \), it holds that
    \begin{equation}
      \label{eq:valid_drawing}      
        d(I(x), I(y)) < d(I(x), I(z)).
    \end{equation}
\end{definition}

A widespread intuition on Robinson spaces is that their elements can be represented by points on a line \cite{Carmona2023}.
It is known that if a dissimilarity space admits a valid drawing in the real line, it is Robinson. 
However, not every Robinson space admits a valid drawing in \( \mathbb{R}^1 \)~\cite{weighted-line-scfe}. 
To overcome this limitation, we seek richer metric spaces that allow more flexibility in representing such structures without sacrificing interpretability. One natural choice is the class of \emph{real trees}.
\begin{definition}[Real Tree]
    A metric space \( (T, d) \) is a \emph{real tree} if it is path-connected, and for every triple \( x, y, z \in T \), there exists a point \( c \in T \) (called the center) such that the geodesics between the pairs \( (x, y) \), \( (y, z) \), and \( (z, x) \) intersect at \( c \).
\end{definition}
Real trees generalize the linear structure of \( \mathbb{R}^1 \), allowing the representation of a broader class of dissimilarity spaces. In particular, since Robinson spaces come with an inherent order, we focus on structured real trees that preserve this order, known as \emph{caterpillars}. 

\paragraph{Caterpillars.}
\label{sec:caterpillar}
Quoting the evocative description by Harary et al.~\cite{HARARY1973359}, ``a caterpillar is a tree which metamorphoses into a path when its cocoon of endpoints is removed.''
In our setting, the continuous caterpillar structure can be naturally interpreted as a discrete one, since we will embed a finite set 
\(S\) into it. This leads us to work with a weighted caterpillar graph, where weights encode the distances between adjacent vertices.

We adopt the following terminology for caterpillar graphs (see Figure~\ref{fig:caterpillar} for an illustration). The central path of a caterpillar is called the \emph{spine}. The vertices belonging to the spine are called \emph{spine vertices}. An edge whose both endpoints lie on the spine is called a \emph{spine edge}. Edges that connect a spine vertex to a leaf are called \emph{legs}. The leaf vertices attached to the spine through legs are simply called \emph{leaves}. This terminology will allow us to distinguish clearly between the structure of the central path and the peripheral attachments, and it will be used consistently throughout the remainder of the paper.

These structured trees strike a balance between expressive power and interpretability. Their linear backbone naturally reflects the total order inherent to Robinson spaces, making them compelling candidates for valid drawings that preserve comparative dissimilarities.
\begin{figure}[t]
  \centering
  \includegraphics[]{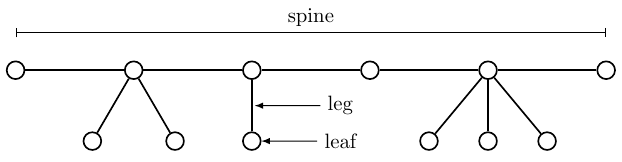}
  \caption{Example of a caterpillar graph}
  \label{fig:caterpillar}
\end{figure}

%
\paragraph{Left-Right Center.}
\label{sec:left-right-center}
Consider the set \(S=\{1,2,3,\ldots,n\}\), and let \((S,\dd)\) be a Robinson space for which the identity order \(1 < 2 < \cdots < n\) is compatible.
Let \( i \leq j \leq k \) be three elements of \( S \) and suppose that
\(\dd(i,j) < \dd(j,k)\),
that is, \( j \) is \emph{less dissimilar} from \( i \) than from \( k \). Since \( S \) is a Robinson space, the dissimilarities along the compatible order are monotone. Hence, for every \( j' \in S \) with \( i \leq j' \leq j \), we have
\[
\dd(i,j') \leq \dd(i,j) < \dd(j,k) \leq \dd(j',k),
\]
which implies
\(\dd(i,j') < \dd(j',k)\).
In other words, all elements between \( i \) and \( j \) are less dissimilar to \( i \) than to \( k \).

Analogously, if
\(\dd(i,j) > \dd(j,k)\),
meaning that \( j \) is less dissimilar to \( k \) than to \( i \), then for every \( j' \) with \( j \leq j' \leq k \) it holds that
\[
\dd(i,j') > \dd(j',k),
\]
so all elements between \( j \) and \( k \) are less dissimilar to \( k \) than to \( i \).

These observations show that the comparison between \( \dd(i,j) \) and \( \dd(j,k) \) is completely determined by the boundary where this inequality of dissimilarities changes. More precisely, it suffices to know, in the compatible order of \( S \), the last element that is less dissimilar to \( i \) than to \( k \), and the first element that is less dissimilar to \( k \) than to \( i \). This transition point captures all the information needed to compare \( \dd(i,j) \) and \( \dd(j,k) \).

The previous discussion motivates the definition of the following central notions. 

For each pair \( i < k \) in \( S \), the \emph{left center} \( \LC(i, k) \) is the largest index \( j \in [i, k] \) such that \( j \) is more similar to \( i \) than to \( k \):
\[
\LC(i,k) = \max \{ j \in [i, k] : \dd(i, j) < \dd(j, k) \}.
\]
Similarly, the \emph{right center} \( \RC(k, i) \) is the  smallest index \( j \in [i, k] \) such that \( j \) is more similar to  \( k \) than to \( i \):
\[
\RC(k,i) = \min \{ j \in [i, k] : \dd(i, j) > \dd(j, k) \}.
\]
We extend the definitions to the diagonal by setting \( \LC(i,i) = \RC(i,i) = i \) for all \( i \in S \).
These notions enable us to encode the relational structure of dissimilarities into a single matrix, defined as follows.
\begin{definition}[Matrix of Centers]
The \emph{matrix of centers} of a Robinson space \( (S, \dd) \) with \( n \) elements is the \( n \times n \) matrix \( \mathbf{\centers(S)}\) defined by:
\[
\mathbf{\centers(S)}_{(i,j)}=
\begin{cases}
    \LC(i,j) & \text{if } i<j,\\
    i       & \text{if } i=j,\\
    \RC(i,j) & \text{if } i>j.\\
\end{cases}
\]
\end{definition}
Figure \ref{fig:StrictRobinsonexample} shows an example of a strict Robinson space and its matrix of centers.

\begin{figure}[t]
    \centering
    \includegraphics[]{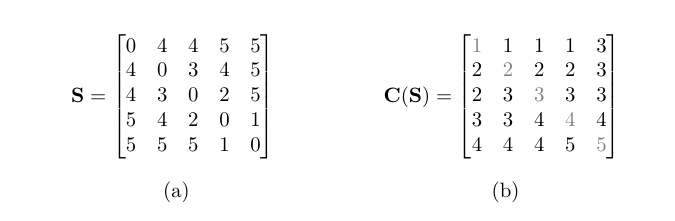}
    \caption{(a) Example of a Robinson space represented by its dissimilarity matrix \textbf{S}. (b) Matrix of centers \textbf{C(S)} of the space \textbf{S}.}
    \label{fig:StrictRobinsonexample}
\end{figure}

We point out an important structural property of the matrix of centers of a Robinson space:
For every Robinson space, the entries of $\mathbf{\centers(S)}$ exhibit monotone behavior along both rows and columns.
More precisely, for any fixed index $i$, the entries in row $i$ of $\mathbf{\centers(S)}$ are non-decreasing as the column index increases.
Similarly, for any fixed index $j$, the entries in column $j$ of $\mathbf{\centers(S)}$ are non-decreasing as the row index increases.
This  monotonicity follows directly from the definitions of left and right centers combined with the Robinson property, which enforces monotone structure with respect to the compatible order.

\paragraph*{Our contributions and structure of the document.}
The structure of the paper and our main contributions are organized as follows. We start by contextualizing our work within the existing literature in Section~\ref{sec:prevworks}.

In Section~\ref{sec:tree_valid_drawing}, we present a \emph{mixed integer linear programming} formulation for finding valid drawings of Robinson spaces into real trees. Although algorithmically useful, this approach may be computationally demanding and does not directly address the more fundamental question of whether all Robinson spaces admit such embeddings.

In Section~\ref{sec:strictrobinsonian}, we introduce a reduction that maps any Robinson space to a strict Robinson space with the property that any valid drawing of the strict space also yields a valid drawing of the original space. This reduction allows us to restrict our attention to strict Robinson spaces without loss of generality.

In Section~\ref{subsec:sfpc}, we introduce the \sfpc, which characterizes exactly those metric spaces that admit isometric embeddings into caterpillars (Theorem~\ref{thm:sfpc_caterpillar}), a result of independent interest. Building on this characterization, in Corollary~\ref{thm:caterpillar} we show that any strict Robinson space admitting a valid drawing in a real tree must necessarily admit one in a caterpillar. In Section~\ref{sec:lp_formulation_strict}, we exploit these structural insights to achieve two objectives: first, we derive an efficient linear programming formulation to decide whether a strict Robinson space admits a valid drawing in a tree; and second, we establish a framework for identifying structural properties that guarantee or preclude the existence of such drawings (Lemma~\ref{thm:kernel_characterization}).

In Section~\ref{sec:characterization-applications}, we apply these tools to obtain both positive and negative results. On the positive side, in Theorem~\ref{thm:center-feasible} we show that any strict Robinson space satisfying
\[
\LC(i,j) \le \tfrac{i+j}{2} \le \RC(j,i) \quad \text{for all } i<j
\]
admits a valid drawing in a caterpillar, providing an easily verifiable sufficient condition. In Theorem~\ref{thm:single-non-redundant}, we establish an alternative sufficient condition based on a more compact representation of the associated linear program. Moreover, we use our computational tools to verify empirically that all Robinson spaces on at most five elements admit valid drawings in trees.

On the negative side, in Theorem~\ref{thm:novalidtree}, we exhibit a strict Robinson space that cannot be embedded into any caterpillar, thereby proving that some Robinson spaces are fundamentally incompatible with tree-based representations that preserve their comparative dissimilarity structure.

Finally, in Section~\ref{sec:conclusions}, we discuss future directions and conclude the paper.

\section{Related work}
\label{sec:prevworks}

Robinson spaces were introduced by W.S. Robinson in \cite{seriation}, where he studied how to order chronologically archeological deposits. In the same study, Robinson introduced the \textit{seriation} problem, which consists of deciding whether a dissimilarity space is Robinson or not, and finding a compatible order if possible. Several works have focused on the recognition and structural
characterization of Robinson spaces. Fortin and Préa \cite{algorithm-robinson}
presented an \ode{n^2} optimal recognition algorithm for recognizing Robinson spaces based on PQ-Trees,
and interval graph properties. Alternative approaches were later proposed by
Laurent and Seminaroti using Lex-BFS \cite{lex-bfs-robinson}, whose time complexity is \ode{L(m+n)}, where $m$ is the number of pairs in the space whose dissimilarity is non-zero, and $L$ is the number of different dissimilarity values in the space. Later, Laurent and Seminaroti generalized this method using
similarity first search \cite{laurent-similarity}, resulting in a recognition algorithm with time complexity \ode{n^2 + nm \log n}. Further structural insight was provided by Laurent et al. \cite{laurent-robinson-characterization},
who characterized Robinson spaces through the absence of weighted asteroidal
triples. More recently, Carmona et al. \cite{modules-robinson} introduced the
notion of modules, and used this notion to propose a divide-and-conquer recognition algorithm with
\ode{n^2} complexity.

The notion of valid drawings was introduced by
Kermarrec and Thraves in \cite{Thraves-2011} as the \textit{Sitting Closer to Friends than Enemies} (SCFE) problem, focused on signed graphs. In this study, the authors provided a polynomial algorithm to determine whether a complete signed graph has a valid drawing on the line, and characterized the set of complete signed graphs that have a valid drawing on the line. A subsequent work by
Cygan et al. \cite{re-thraves-2011} established a connection between valid
drawings in the real line and proper interval graphs, and showed that deciding
the existence of such drawings is NP-complete when considering incomplete signed graphs. The SCFE problem was also explored as an optimization problem by Pardo et al. \cite{pardo-thraves-soto}, who related the problem of finding a valid drawing on the line to the quadratic assignment problem, and provided two optimization algorithms to construct valid drawings while trying to minimize the number of errors, that is, the violation of the inequality that defines a valid drawing.

The SCFE problem has also been studied for different metric spaces. Spaen et
al. in \cite{Spaen2020TheDO} studied the problem of finding the minimal dimension $k$ required such that any signed graph of $n$ vertices has a valid drawing in $\mathbb{R}^k$, denoted by $L(n)$, and proved that $log_5(n-3) \leq L(n) \leq n-2$. Benítez et al. in \cite{scfte-circle} studied the SCFE problem in the circumference, showing that it is NP-Complete, and characterized the complete signed graphs that have a valid drawing on the circumference using proper circular arc graphs. Becerra and
Thraves \cite{scfe-tree} analyzed valid drawings of signed graphs in real trees, showing that a
complete signed graph admits such a drawing if and only if its positive subgraph
is strongly chordal.

An extension of valid drawings was proposed by Aracena and
Thraves in \cite{weighted-line-scfe}, where they studied the existence of a valid
drawing on the real line for weighted graphs. They showed that the SCFE problem for dissimilarity spaces on the line is related to the seriation problem, and they also proved that the two problems are not equivalent. They proved that the Robinson property is necessary, but not sufficient, for the existence of valid drawings on the line, and that if a complete Robinson space has a valid drawing on the line for a compatible order, then it has a valid drawing for any of its compatible orders. They also formulated the weighted SCFE problem as an optimization problem, and provided a polynomial time method to determine if a complete dissimilarity space has a valid drawing on the line.

\section{Tree valid drawing problem}
\label{sec:tree_valid_drawing}

  In this section, we present a Mixed Integer Linear Program (MILP) that, given a Robinson space, determines whether it admits a valid drawing in a real tree. The construction of this MILP relies on a structural criterion to characterize tree metrics, which can be obtained from the following results. 

\begin{definition}[\fpc]
  Let $d$ be a metric on a set of objects $\metspace$. We say that $d$ satisfies the \fpc if, for all $\{x, y, u, v\} \subseteq \metspace$, it holds:
  \begin{equation}
    \label{eq:fpc}
    d(x, y)+d(u, v) \leq \max \{ d(x, u)+d(y, u), d(x, v)+d(y, u) \}.
  \end{equation}
\end{definition}

  This condition is key to ensuring the existence of a tree representing a finite metric space. 

  \begin{theorem}[Buneman~\cite{Buneman1974}]
  Given an $(n \times n)$ distance matrix $\mathbf{D}$.
  There is an unrooted tree whose path metric is compatible with $\mathbf{D}$ if and only if the metric defined by $\mathbf{D}$ satisfies \fpc.
\end{theorem}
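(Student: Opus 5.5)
The statement as written has a typographical slip in \eqref{eq:fpc}: the intended inequality is the classical one,
\[
d(x,y)+d(u,v)\le \max\{d(x,u)+d(y,v),\; d(x,v)+d(y,u)\}.
\]
Equivalently, among the three pair-sums $d(x,y)+d(u,v)$, $d(x,u)+d(y,v)$, $d(x,v)+d(y,u)$ the two largest coincide. I will prove the theorem for this version. The trees are understood to have nonnegative edge weights, and the points of $\metspace$ may sit at internal vertices as well as at leaves.

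\emph{Necessity.} Let $x,y,u,v$ be vertices of a weighted tree. The union of the four geodesics from these points is a subtree. After contracting degree-two vertices, this subtree has the shape of a quartet: say $\{x,y\}$ and $\{u,v\}$ are separated by a (possibly degenerate) middle path of length $m\ge 0$.
\begin{itemize}
\item Summing edge lengths gives $d(x,y)+d(u,v)=L$, where $L$ is the total length of the four pendant paths.
\item Likewise, the other two pair-sums both equal $L+2m$.
\end{itemize}
So the two largest sums coincide, which is exactly the four-point condition.

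\emph{Sufficiency.} I argue by induction on $n=|\metspace|$. The cases $n\le 3$ are handled directly: three points always embed in a tripod with leg lengths given by the Gromov products $(y|z)_x=\tfrac12(d(x,y)+d(x,z)-d(y,z))$, which are nonnegative by the triangle inequality.

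For the inductive step, suppose the points $\metspace'=\metspace\setminus\{z\}$ are realized in a weighted tree $T'$. Note that every subspace inherits the four-point condition, so the hypothesis applies to $\metspace'$.
\begin{enumerate}
\item Choose $x,y\in\metspace'$ minimizing $h=(x|y)_z$; this $h$ will be the leg length.
\item Let $p$ be the point on the $T'$-geodesic from $x$ to $y$ at distance $d(x,z)-h=(z|y)_x$ from $x$. This position lies on the segment because $0\le (z|y)_x\le d(x,y)$.
\item Subdivide the edge containing $p$ if necessary, and attach $z$ to $p$ by a leg of length $h$. If $h=0$, then $z=p$.
\end{enumerate}
By construction, $d_T(z,x)=d(z,x)$ and $d_T(z,y)=d(z,y)$.

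\emph{Main obstacle.} The crux is to verify $d_T(z,w)=d(z,w)$ for every other $w\in\metspace'$. Let $q$ be the point where the $T'$-geodesic from $w$ meets the $x$–$y$ geodesic. Then $d_T(z,w)=h+d_{T'}(p,q)+d_{T'}(q,w)$. Assume $q$ lies between $x$ and $p$; the other case is symmetric.
\begin{itemize}
\item In this case $d_T(z,w)=d(z,y)-d(y,q)+d(q,w)$, and $d(y,q)=(x|w)_y$ is computed in $T'$ from the true distances, by induction.
\item Apply the four-point condition to $\{x,y,z,w\}$. The minimality of $(x|y)_z$ gives $(x|y)_z\le(y|w)_z$ and $(x|y)_z\le(x|w)_z$.
\item These inequalities rule out the quartet pairing $\{z,w\}\,|\,\{x,y\}$ as strictly the smallest-sum split when $q\neq p$. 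This forces $d(z,w)+d(x,y)$ to be one of the two equal larger sums, namely $d(z,w)+d(x,y)=d(z,y)+d(x,w)$.
\item Rewriting that identity via the tree distances, using $d(x,w)=d(x,q)+d(q,w)$ and $d(x,y)=d(x,q)+d(q,y)$, yields exactly $d(z,w)=d_T(z,w)$.
\end{itemize}

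The degenerate cases, namely $q=p$ or ties among the sums, need separate care. For $q=p$, one has to show that the leg meets the $w$-branch correctly. Here the minimality of $h$ again gives $d(z,w)=h+d(p,w)$, via the four-point condition applied to $\{x,z,w,y\}$. This case analysis is where I expect most of the work to lie.
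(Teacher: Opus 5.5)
The paper gives no proof of this statement; it only cites Buneman. Your proposal therefore has to stand on its own. Several parts are sound:
\begin{itemize}
\item your correction of the misprint in \eqref{eq:fpc};
\item the necessity argument;
\item the overall sufficiency strategy, which is the standard constructive proof: insert $z$ with leg length $h=\min_{x,y}(x|y)_z$ at the point $p$ of the geodesic from $x$ to $y$ at distance $(z|y)_x$ from $x$.
\end{itemize}

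\textbf{The gap.} The step you call the crux is wrong as written.
\begin{itemize}
\item Under your stated case ($q$ between $x$ and $p$), the geodesic from $z$ to $w$ runs $z\to p\to q\to w$ towards $x$. So $d_T(z,w)=h+d(p,q)+d(q,w)=d(z,x)-d(x,q)+d(q,w)$. Your expression $d(z,y)-d(y,q)+d(q,w)$ equals $h-d(p,q)+d(q,w)$ instead.
\item In that case the identity forced by the \fpc is $d(z,w)+d(x,y)=d(z,x)+d(w,y)$, not $d(z,y)+d(x,w)$. For example, take a path from $x$ to $y$ of length $10$. Hang $z$ at distance $6$ from $x$ by a leg of length $1$, and $w$ at distance $2$ from $x$ by a leg of length $3$. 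Then $h=1$ is minimal, and the three sums $d(z,w)+d(x,y)$, $d(z,x)+d(w,y)$, $d(z,y)+d(w,x)$ are $18,18,10$. Your identity would read $18=10$.
\end{itemize}
The two swaps cancel, which hides the real omission: nothing in your argument decides \emph{which} of the other two sums ties with $A:=d(z,w)+d(x,y)$. Minimality of $h$ gives exactly $A\ge B:=d(z,x)+d(w,y)$ and $A\ge C:=d(z,y)+d(w,x)$, and this holds whether or not $q\neq p$. Hence the \fpc yields $A=\max\{B,C\}$. Choosing between $B$ and $C$ is precisely where $T'$ and the position of $q$ must be used, and you never do this.

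\textbf{The fix.} It is a short computation. Put $a=d_{T'}(x,p)=d(x,z)-h$ and $b=d_{T'}(x,q)$.
\begin{itemize}
\item By construction, $d(z,x)=h+a$ and $d(z,y)=h+d(x,y)-a$.
\item Since $q$ is the projection of $w$ onto the $x$--$y$ geodesic in $T'$, $d(w,x)=b+d_{T'}(q,w)$ and $d(w,y)=d(x,y)-b+d_{T'}(q,w)$.
\item Therefore $B-d(x,y)=h+(a-b)+d_{T'}(q,w)$ and $C-d(x,y)=h+(b-a)+d_{T'}(q,w)$.
\end{itemize}
Hence
\[
d(z,w)=\max\{B,C\}-d(x,y)=h+|a-b|+d_{T'}(q,w)=d_T(z,w).
\]
This single identity covers every position of $q$, including $q=p$ and all ties among the sums. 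The separate degenerate-case analysis you expected to be most of the work is not needed.
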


  Moreover, the tree associated with a metric satisfying the \fpc is unique~\cite{Hendy92}.

  \paragraph{MILP formulation.}
  A common technique for the construction of a tree metric meeting a set of constraints is the use of a linear programming formulation~\cite{WSSB77,CohenAddad2024,FORTZ17}.
  
  Given a Robinson space $(S,\dd)$ we define, for each pair of distinct elements $i, j \in S$, the variable $d_{ij}$ representing the distance between the images of $i$ and $j$ in a tree valid drawing of $S$, with $d_{ii}=0$ for each $i\in S$.
  
  From this set of variables, a potential formulation for the tree valid drawing of a Robinson space is presented in \ref{milp1}, where variables must satisfy three types of constraints.

  First, \textbf{valid drawing conditions} enforce the distances to respect the restrictions imposed by the compatible order (Equation~\ref{eq:valid_drawing}).
  Note that these constraints require strict inequalities; therefore, in \ref{milp1} we introduce a small constant $\varepsilon >0$ to represent them as inclusive inequalities.
    
  Second, \textbf{metric conditions} must guarantee that variables define a metric space, that is, distances between elements must be non-negative, symmetric, and must satisfy the triangle inequality. 

  Third, the \textbf{\fpc} ensures that the induced metric is compatible with a tree metric (Equation~\ref{eq:fpc}).
  These restrictions are non-convex and require the use of binary indicator variables.
  In \ref{milp1} this condition is modeled using the big-$M$ method (where $M\gg 0$) together with binary variables $x_{ijkl}^1,x_{ijkl}^2$ and $x_{ijkl}^3$ for each  $\{i,j,k,l\} \in \QQ$, where $\QQ$ denotes the set of (unordered) quadruplet having distinct values.
  These decision variables identify which of the three possible pairwise sums in Equation~\ref{eq:fpc} is minimal, forcing the two larger sums to be equal.

  {\small  \begin{align*}
  \label{milp1} \tag{$LP1$}
      \text{minimize} \quad &  f(S,\dd)  \\
    \text{subject to} \quad
    & d_{ij} \le  d_{ik}+\varepsilon \hspace{85pt} \forall \,i,j,k \in S,\, \dd(i,j) < \dd(i,k)\\
                            & d_{ik} \le d_{ij}+\varepsilon \hspace{85pt} \forall \,i,j,k \in S,\, \dd(i,j) > \dd(i,k)\\[7pt]
    & d_{ik} \le d_{ij}+d_{jk} \hspace{140pt} \forall \,i,j,k \in S\\
    & d_{ij} = d_{ji}, \: d_{ij} \ge 0  \hspace{130pt} \forall\, \{i,j\}  \in S \\[5pt]
    & \hspace{-4pt}\left|\left(d_{i k}+d_{j l}\right)-\left(d_{i l}+d_{j k}\right)\right| \leq M\left(1-x_{i j k l}^1\right) \qquad \forall \,\{i,j,k,l\} \in \QQ\\
    & \left(d_{i j}+d_{k l}\right)- \left(d_{i k}+d_{j l}\right) \leq M\left(1-x_{i j k l}^1\right) \qquad \forall \,\{i,j,k,l\} \in \QQ\\
    & \left(d_{i j}+d_{k l}\right)-\left(d_{i l}+d_{j k}\right) \leq M\left(1-x_{i j k l}^1\right) \qquad \forall \,\{i,j,k,l\} \in \QQ \\[5pt]
   & \hspace{-4pt}\left|\left(d_{i j}+d_{k l}\right)-\left(d_{i l}+d_{j k}\right)\right| \leq M\left(1-x_{i j k l}^2\right) \qquad \forall \,\{i,j,k,l\} \in \QQ\\
   & \left(d_{i k}+d_{j l}\right) - \left(d_{i j}+d_{k l}\right) \leq M\left(1-x_{i j k l}^2\right) \qquad \forall \,\{i,j,k,l\} \in \QQ\\
   &  \left(d_{i k}+d_{j l}\right)-\left(d_{i l}+d_{j k}\right) \leq M\left(1-x_{i j k l}^2\right) \qquad \forall \,\{i,j,k,l\} \in \QQ \\[5pt]
   & \hspace{-4pt}\left|\left(d_{i j}+d_{k l}\right)-\left(d_{i k}+d_{j l}\right)\right| \leq M\left(1-x_{i j k l}^3\right)\qquad \forall \,\{i,j,k,l\} \in \QQ\\
   &  \left(d_{i l}+d_{j k}\right) - \left(d_{i j}+d_{k l}\right) \leq M\left(1-x_{i j k l}^3\right) \qquad \forall \,\{i,j,k,l\} \in \QQ\\
                            &  \left(d_{i l}+d_{j k}\right) - \left(d_{i k}+d_{j l}\right) \leq M\left(1-x_{i j k l}^3\right) \qquad \forall \,\{i,j,k,l\} \in \QQ\\
    & \sum_{s\in \{1,2,3\} }x_{i j k l}^s=1 \hspace{115pt} \forall \,\{i,j,k,l\} \in \QQ \\[5pt]  
    & x_{i j k l}^s \in \{0,1\} \hspace{70pt} \forall s \in \{1,2,3\}, \,\forall \,\{i,j,k,l\} \in \QQ
\end{align*}
}
  Note that the formulation \ref{milp1} is a feasibility problem. Therefore, the objective function can be chosen arbitrarily.
  Nonetheless, the formulation requires \( O(n^4) \) integer variables and constraints, which can be computationally demanding for large instances.
  In the following sections, we show that, for the class of strict Robinson spaces, tree valid drawings have a caterpillar topology. 
  This structural insight enables us to design a significantly more compact linear formulation.

\section{Reduction to strict Robinson spaces}
\label{sec:strictrobinsonian}

  In the following, we focus on the class of strict Robinson spaces. Recall that a Robinson dissimilarity space $(S,\dd)$ is said to be \emph{strict} if all its dissimilarity values are pairwise distinct.

Although strict Robinson spaces form a proper subclass of Robinson spaces, we show that this restriction is made without loss of generality. Indeed, every Robinson space can be associated with a strict Robinson space defined on the same set of elements and satisfying the following properties:
\begin{itemize}
  \item[(i)] the compatible order of the original space is preserved, and
  \item[(ii)] every valid drawing of the strict Robinson space is also a valid drawing of the original Robinson space.
\end{itemize}
Consequently, by working with strict Robinson spaces, we retain full generality while benefiting from stronger structural properties that simplify both the analysis and the formulation of the problem.

  The construction that maps a Robinson space $(S,\dd)$ to a strict Robinson space $(S,\sd)$ reassigns dissimilarity values to each pair according to the relative order of the original dissimilarities.
To define the strict Robinson space $(S,\sd)$, let us consider the set
\(\mathcal U = \bigl\{ \{i,j\} \subset S : i<j \bigr\}\)
of unordered pairs of elements of $S$.

First, we construct a total order $\pi$ on the set $\mathcal U$ according to the following sequence of criteria:
\begin{enumerate}
  \item the value of $\dd(i,j)$, in increasing order;
  \item the value of $i$, in decreasing order;
  \item the value of $j$, in increasing order.
\end{enumerate}
These criteria eliminate all potential ties among elements of $\mathcal U$, thus ensuring that $\pi$ is a total order.
Second, for each pair $(i,j)\in \mathcal U$, dissimilarity is defined as its rank according to order $\pi$:  
\[\sd(i,j) = \pi(i,j).\]
By construction, all values of $\sd(i,j)$ are pairwise distinct and belong to the set
\(\{1,2,\ldots,n(n-1)/2\}\).

Finally, we define the dissimilarity space $(S,\sd)$, which we call the \emph{strict mapping} of $(S,\dd)$, by symmetrically extending $\sd$ to all pairs of elements of $S$:
\[
\sd(i,i)=0 \quad \text{for all } i\in S,
\qquad
\sd(j,i)=\sd(i,j) \quad \text{for all } j > i.
\]
Clearly, the strict mapping  $(S,\sd)$ is a strict Robinson space on the same ground set $S$.
Figure~\ref{fig:strict-mapping} depicts an example of a Robinson space 
and its strict mapping. 

\begin{lemma} \label{lema: relacion-dibujos-construccion}
  Let $(S,\dd)$ be a Robinson space, and let $(S,\sd)$ be its strict mapping. Then, the following statements hold: 
\begin{itemize}
\item Every compatible order for $(S,\dd)$ is also compatible for  $(S,\sd)$.
    \item 
  Let $(\metspace,d)$ be a metric space, and suppose that $I:S\to\metspace$ is a valid drawing of $(S,\sd)$.
  Then $I$ is also a valid drawing of $(S,\dd)$.
  \end{itemize}
\end{lemma}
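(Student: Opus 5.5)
The plan is to derive both items from one monotonicity property of the order $\pi$: since the first criterion sorts pairs by $\dd$ in increasing order, $\dd(\{a,b\})<\dd(\{c,e\})$ implies $\sd(a,b)<\sd(c,e)$. In other words, $\sd$ is a strictly order-preserving refinement of $\dd$. I will also use the converse for non-strict inequalities: if $\sd(a,b)\le \sd(c,e)$ then $\dd(a,b)\le\dd(c,e)$. The tie-breaking criteria 2 and 3 only matter when the $\dd$-values coincide.

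\textbf{Second item.} Let $I$ be a valid drawing of $(S,\sd)$, and take $x,y,z$ with $\dd(x,y)<\dd(x,z)$. The case $y=x$ is trivial, since injectivity of $I$ gives $d(I(x),I(x))=0<d(I(x),I(z))$. Otherwise both pairs are genuine elements of $\mathcal U$, and criterion 1 ranks $\{x,y\}$ before $\{x,z\}$. Hence $\sd(x,y)<\sd(x,z)$. Validity of $I$ for $\sd$ then gives $d(I(x),I(y))<d(I(x),I(z))$, which is exactly validity for $\dd$.

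\textbf{First item, identity order.} I will first show that $1<2<\dots<n$ is compatible for $\sd$. Fix $i<j<k$.
\begin{itemize}
\item Compare $\{i,k\}$ with $\{i,j\}$. Robinson gives $\dd(i,k)\ge\dd(i,j)$. If the inequality is strict, criterion 1 decides. If it is an equality, the first indices coincide and criterion 3 places $\{i,j\}$ first because $j<k$. Either way $\sd(i,j)<\sd(i,k)$.
\item Compare $\{i,k\}$ with $\{j,k\}$. If $\dd(i,k)=\dd(j,k)$, criterion 2 (first index decreasing) places $\{j,k\}$ first because $j>i$. So $\sd(j,k)<\sd(i,k)$.
\end{itemize}
The Robinson inequality for $\sd$ therefore holds, in fact strictly. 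The reversed order $n<\dots<1$ follows at once, because the Robinson condition for a triple is symmetric under reversal.

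\textbf{Main obstacle: arbitrary compatible orders.} For a general compatible order $\sigma$ of $(S,\dd)$, the same argument goes through whenever the relevant $\dd$-inequalities are strict. The difficulty is the tie case, because the tie-breaks in $\pi$ refer to the numerical labels, not to $\sigma$.

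I expect this step to fail as worded. Take $n=3$ with all dissimilarities equal. Then every order is compatible for $\dd$, and $\sd(2,3)=1$, $\sd(1,2)=2$, $\sd(1,3)=3$. For the order $2<1<3$ compatibility would need $\sd(2,3)\ge\sd(1,3)$, which is false.

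So the first thing I would try is to prove the first item in the form in which it is true and used:
\begin{itemize}
\item every compatible order of $(S,\dd)$ is compatible for the strict mapping built with criteria 2 and 3 taken with respect to that order, which is exactly the identity-order argument above after relabeling $S$ along $\sigma$;
\item equivalently, under the paper's standing convention that the identity order is the chosen compatible order, that order and its reverse are compatible for $\sd$.
\end{itemize}
I would state explicitly that the literal claim for arbitrary compatible orders needs this relabeling.
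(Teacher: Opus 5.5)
Your proposal is correct. On the parts of the statement that are true, it follows the paper's own argument. The second item is proved exactly as in the paper: criterion 1 makes $\hat{\rho}$ a strict refinement of $\rho$. Your analysis of a triple $i<j<k$ is the paper's proof of the first item with the tie cases written out (criterion 3 for $\{i,j\}$ against $\{i,k\}$, criterion 2 for $\{j,k\}$ against $\{i,k\}$).

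Your objection to the first item as worded is also correct. The all-equal space on three points is a legitimate Robinson space, and $2<1<3$ is compatible for $\rho$. Since $\hat{\rho}(2,3)=1<3=\hat{\rho}(1,3)$, that order is not compatible for $\hat{\rho}$. A single tie already suffices: with $\rho(1,2)=1$ and $\rho(1,3)=\rho(2,3)=2$, the order $2<1<3$ is compatible for $\rho$. The strict mapping gives $\hat{\rho}(2,3)=2<3=\hat{\rho}(1,3)$, so that order again fails for $\hat{\rho}$.

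The paper's proof breaks at exactly the step you flagged. It fixes $i<j<k$ in an arbitrary compatible order and then argues ``because $i<j<k$, the rules defining $\pi$ ensure\ldots''. Those rules compare numerical labels, so the argument only establishes compatibility of the identity order, and hence of its reverse. Your restricted version is what is actually proved: either the identity order and its reverse, or an arbitrary compatible order after relabeling $S$ along it before building $\pi$. It is also all the paper uses later, since it works throughout with $1<2<\cdots<n$ as the compatible order.
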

 \begin{proof} 
 We first show that every compatible order for $(S,\dd)$ is also compatible for $(S,\sd)$. 
To this end, let $i<j<k$ be three elements of $S$ ordered according to a compatible order for $(S,\dd)$. 
By definition of a Robinson space, we have
\[
\dd(i,k) \ge \max\{\dd(i,j),\dd(j,k)\},
\]
that is, $\dd(i,k)\ge \dd(i,j)$ and $\dd(i,k)\ge \dd(j,k)$.

By construction of the strict mapping, the order $\pi$ ranks pairs primarily according to increasing values of $\dd$, and breaks ties using the pair indices. Since $(i,k)$ has a dissimilarity value greater than or equal to those of $(i,j)$ and $(j,k)$, and because $i<j<k$, the rules defining $\pi$ ensure that
\[
\sd(i,k) >_{\pi} \sd(i,j)
\qquad\text{and}\qquad
\sd(i,k) >_{\pi} \sd(j,k).
\]
Consequently,
\[
\sd(i,k) > \max\{\sd(i,j),\sd(j,k)\},
\]
which proves that the order $i<j<k$ is also compatible for $(S,\sd)$. Hence, the same order is also compatible for $(S,\sd)$.

  Now, for the second statement, let  $I$ be a valid drawing of $(S,\sd)$.
  To prove that $I$ is also a valid drawing of $(S,\dd)$, consider $i,j,k\in S$ such that $\dd(i,j) < \dd(i,k)$.

  By the definition of the dissimilarity $\sd$ we have that
  $\sd(i,j) < \sd(i,k)$.
  Therefore, $d\bigl(I(i),I(j)\bigr) < d\bigl(I(i),I(k)\bigr)$.
  Thus, for every triplet of element $i,j,k \in S$
\[
\dd(i,j) < \dd(i,k)
\quad \Rightarrow \quad 
d\bigl(I(i),I(j)\bigr) < d\bigl(I(i),I(k)\bigr).
\]
Hence, $I$ is a valid drawing for $(S,\dd)$.
\end{proof}

\begin{figure}[t!]
\centering
\includegraphics[]{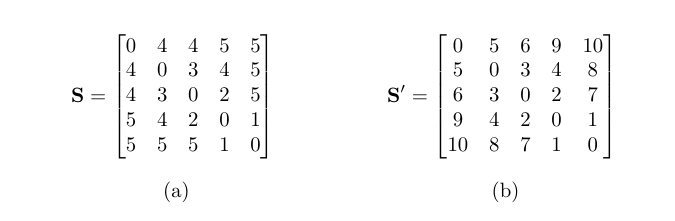}
\caption{(a) Example of a Robinson space \textbf{S} represented by its dissimilarity matrix. (b) The strict mapping for the space \textbf{S} represented by its dissimilarity matrix.}
\label{fig:strict-mapping}
\end{figure}

\section{\sfpc}\label{subsec:sfpc}
In this section, we characterize caterpillar metric spaces in the spirit of Buneman’s Theorem by providing a condition similar to the classical \fpc, which we call the \emph{\sfpc}.

Let $(S,\dd)$ be a strict Robinson space on $n>4$ elements, and consider any quadruple of vertices ordered according to a compatible order. For the sake of simplicity (and \emph{w.l.o.g.}), we denote these vertices by $1,2,3,$ and $4$. Any valid drawing of $(S,\dd)$ in a tree must satisfy the \fpc. Moreover, since $(S,\dd)$ is strict Robinson, we have
\(
\dd(1,2) < \dd(1,3)\)
and 
\(\dd(3,4) < \dd(2,4)\).
This implies that in any corresponding tree drawing, the distances satisfy
\[
d(1,2) < d(1,3)
\qquad\text{and}\qquad
d(3,4) < d(2,4).
\]
Consequently,
\[
d(1,2) + d(3,4) < d(1,3) + d(2,4).
\]

The \fpc implies
\begin{eqnarray*}
d(1,2) + d(3,4)&\leq&
\max\{ d(1,3) + d(2,4),\, d(1,4) + d(2,3)\},
\\
d(1,3) + d(2,4)&\leq&
\max\{ d(1,2) + d(3,4),\, d(1,4) + d(2,3)\},
\, \text{and}\\
d(1,4) + d(2,3)&\leq&
\max\{ d(1,3) + d(2,4),\, d(1,2) + d(3,4)\}.
\end{eqnarray*}
Together with the strict inequality
\(
d(1,2) + d(3,4) < d(1,3) + d(2,4),
\)
these relations force
\[
d(1,3) + d(2,4) = d(1,4) + d(2,3).
\]
This observation motivates the following definition:
  
  \begin{definition}[\sfpc]
    \label{def:sfpc}
    Let $(\metspace,d)$ be a finite metric space.
    We say that $d$ satisfies the \emph{\sfpc} if $d$ satisfies the \fpc and there exists an ordering $\pi$ of $\metspace$ such that for all $\{i, j, k, l\} \subseteq \metspace$ with $i \le_{\pi} j \le_{\pi} k \le_{\pi} l$, it holds:
  \begin{equation}
    \label{eq:sfpc}
d(i, j) + d(k, l) \leq d(i, k)+d(j, l) = d(i, l) + d(j, k).
  \end{equation}
\end{definition}

This result has important consequences for the search for valid drawings for Robinson spaces. 
First, from an algorithmic perspective, the condition significantly simplifies the constraints describing the \fpc. Indeed, for any quadruple, it is known in advance which of the three possible sums is the smallest one, and therefore no binary decision variables are required to encode this choice. 

Second, from a geometric point of view, in Theorem~\ref{thm:sfpc_caterpillar} we prove that tree metrics satisfying the \sfpc necessarily have a caterpillar topology. 
Together, these properties allow us to reduce both the size of the formulation and, ultimately, the computational complexity of finding a valid drawing in a tree.

 \begin{theorem}
      \label{thm:sfpc_caterpillar}
  Let \((\metspace, d)\) be a finite metric space. Then, $(\metspace,d)$ satisfies the \sfpc if and only if $(\metspace,d)$  can be isometrically embedded in a caterpillar. 
 \end{theorem}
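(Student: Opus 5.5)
We prove the two directions separately, using Buneman's Theorem to pass from the metric to a weighted tree. Throughout, a caterpillar is a weighted tree in which every vertex lies on, or is adjacent to, a central path (the spine). We allow zero-length legs, so a point of $\metspace$ may sit directly on the spine.

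\emph{Caterpillar $\Rightarrow$ \sfpc.} Suppose $\metspace$ is isometrically embedded in a weighted caterpillar with spine $s_1,\dots,s_m$. For each $x\in\metspace$, let $p(x)$ be the index of the spine vertex at which $x$ attaches, and let $\ell(x)\ge 0$ be the length of its leg. For $x\neq y$ we then have
\[
d(x,y)=\ell(x)+\ell(y)+\delta(p(x),p(y)),
\]
where $\delta$ is the distance along the spine. (If $x,y$ share a spine vertex, $\delta=0$, and the formula is still correct because leaves are distinct tree vertices.) Tree metrics satisfy the \fpc, so only the ordering condition needs checking. Let $\pi$ order $\metspace$ by $p(x)$, breaking ties arbitrarily. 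For $i\le_\pi j\le_\pi k\le_\pi l$, all four sums contain the same leg terms $\ell(i)+\ell(j)+\ell(k)+\ell(l)$. On a path with $p(i)\le p(j)\le p(k)\le p(l)$ we have
\[
\delta(i,k)+\delta(j,l)=\delta(i,l)+\delta(j,k)=\delta(i,l)+\delta(j,k),
\]
since both equal $\delta(i,l)+\delta(j,k)$ by additivity along the segment. We also have $\delta(i,j)+\delta(k,l)\le \delta(i,l)-\delta(j,k)\le \delta(i,l)+\delta(j,k)$. This gives \eqref{eq:sfpc}.

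\emph{\sfpc $\Rightarrow$ caterpillar.} By Buneman's Theorem, $d$ is realized by a weighted tree $T$ containing $\metspace$. We may take $T$ minimal: every leaf lies in $\metspace$, and every vertex of degree $\le 2$ not in $\metspace$ is suppressed. Let $a$ and $b$ be the $\pi$-minimum and $\pi$-maximum elements, and let $P$ be the path from $a$ to $b$ in $T$. For each $x$, let $q(x)$ be the projection of $x$ onto $P$. The claim is that every $x$ satisfies
\[
d(x,q(x)) = \text{length of an edge of } T,
\]
that is, the subtree hanging off $P$ at $q(x)$ is a single leg. Suppose instead that two points $x,y$ hang off the same point $c$ of $P$ through a common internal vertex $w\neq c$ (so $d(x,y)<d(x,c)+d(c,y)$). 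Then the quadruple $\{a,x,y,b\}$ has its unique minimal split $\{x,y\}\mid\{a,b\}$, with the other two sums equal and strictly larger. Whatever the $\pi$-positions of $x$ and $y$ between $a$ and $b$, \eqref{eq:sfpc} forces the minimum to be achieved by the pairing $\{a,\cdot\},\{\cdot,b\}$ of consecutive elements, and forces the two sums containing $d(a,b)$-type crossings to be equal. The pairing $\{a,b\}\{x,y\}$ is one of those equal sums, so it cannot be the strict minimum. This is a contradiction.

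Hence every off-path subtree is a star centered at a point of $P$, with each element either on $P$ or at the end of a single leg. Removing the leaves therefore leaves a path, which means $T$ is a caterpillar and the embedding is isometric.

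\emph{Main obstacle.} The delicate step is the second direction, specifically showing that off-spine components are single edges. Two things need care there. First, the quadruple argument needs care when $x$ or $y$ lies on $P$, or when their projections coincide. Second, one must use $a,b$ being $\pi$-extreme to identify which pairing \eqref{eq:sfpc} forbids as the strict minimum. If a pendant subtree contained a branching internal vertex with two or more elements below it, that subtree would be exactly the configuration $\{x,y\}\mid\{a,b\}$ ruled out above. I would handle the degenerate cases by the same four-point computation, with some distances coinciding.
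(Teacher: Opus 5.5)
Your proposal is correct, and for the hard direction it takes a different route from the paper. The direction caterpillar $\Rightarrow$ \sfpc is essentially the paper's argument: order the points by their spine vertex and compare the three pairings. Your formula $d(x,y)=\ell(x)+\ell(y)+\delta(p(x),p(y))$ makes explicit what the paper reads off its figure via Gromov products. There are a few minor slips: your displayed chain repeats the same term, ``all four sums'' should be ``all three sums'', and the claim $d(x,q(x))=$ edge length must allow $x\in P$.

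\textbf{The paper's converse.} The paper proceeds constructively, by induction on $|\metspace|$. It starts from an explicit four-point caterpillar. It then subdivides the spine edge between the attachment points of $n-1$ and $n$ and hangs the new point there, with lengths given by Gromov products. The new distances are checked using the equality in \eqref{eq:sfpc}.

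\textbf{Your converse.} You instead take Buneman's tree, pruned to be minimal, and show it is already a caterpillar whose spine is the geodesic between the $\pi$-extremes $a,b$. For any $x,y$, the condition on $\{a,x,y,b\}$ makes $d(a,b)+d(x,y)$ equal to another pairing sum. So $\{a,b\}\mid\{x,y\}$ is never the strict minimal split, whereas two points in one component of $T\setminus P$ would force exactly that.

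The ``main obstacle'' you flag closes with no case analysis:
\begin{enumerate}
\item The argument above shows every component of $T\setminus P$ contains at most one point of $\metspace$.
\item By minimality, each component contains a leaf of $T$, and that leaf lies in $\metspace$.
\item So the component plus its attachment vertex is a path whose interior vertices have degree $2$ and lie outside $\metspace$. Minimality suppresses them, so the component is a single leg.
\end{enumerate}
This also covers a point of $\metspace$ sitting inside a pendant path, which your ``branching vertex'' remark omits.

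\textbf{What each approach buys.} The paper's induction is self-contained and directly produces the spine and leg lengths used in the feasibility formulation. Yours is shorter and explains why the spine runs between the $\pi$-extremes. It also uses the ordering condition only on quadruples containing both extremes. So the four-point condition plus this much weaker requirement already forces a caterpillar, and hence, by your first direction, the full condition for the projection order.
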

 
 \begin{proof}
Let $(\metspace,d)$ be a metric space satisfying the \sfpc. 
We begin by showing that $(\metspace,d)$ admits an isometric embedding into a caterpillar. 
Assume that the elements of $\metspace$ are labeled according to an order $\pi$ prescribed by the \sfpc.

We proceed by induction on $n$. Assume that there exists a caterpillar compatible with the metric induced by the first $n$ elements of $\metspace$. We show that this caterpillar can be extended to incorporate the $(n+1)$-th element in $\pi$.

  \begin{figure}[t]
    \centering
      \includegraphics[]{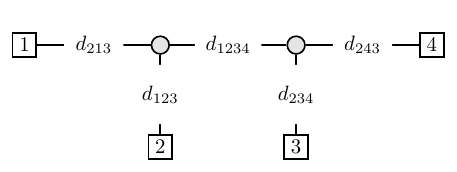}
  \caption{Caterpillar construction for the metric space $(\{1,2,3,4\},d)$ satisfying the \sfpc. The distances $d_{ijk}$ and $d_{ijkl}$ in the caterpillar are defined using the distance $d$ in the metric space as follows: $d_{ijk}=\frac{1}{2}(d(i,j)+d(j,k)-d(i,k))$, and $d_{ijkl}=d(i,l)+d(j,k)-(d(i,j)+d(k,l))$.}
  \label{fig:centipede_4}
\end{figure}

  In the base case, consisting of four elements (say \(1\leq_\pi 2\leq_\pi 3 \leq_\pi 4\), where \(\pi\) is the ordering given by the \sfpc), the caterpillar can be constructed as illustrated in Figure~\ref{fig:centipede_4}, where the length of each edge is given explicitly. In the figure, $d_{ijk}$ denotes the Gromov product, defined by
\[
d_{ijk}=\tfrac{1}{2}\bigl(d(i,j)+d(j,k)-d(i,k)\bigr),
\]
and we set
\[
d_{ijkl}=d(i,k)+d(j,l)-(d(i,j)+d(k,l)).
\]

All edge lengths in the construction are non-negative. Indeed, by the triangle inequality we have $d_{ijk}\ge 0$ for all triples $(i,j,k)$, and by the \sfpc we obtain
\[
d_{1234}=d(1,3)+d(2,4)-(d(1,2)+d(3,4)) \ge 0.
\]
Furthermore, by a direct algebraic verification, all pairwise distances realized in the caterpillar are seen to coincide with the corresponding values of the metric $d$.
 Hence, the caterpillar provides an isometric embedding of the four-point metric space.

 By the inductive hypothesis, assume that every metric space on $n$ objects satisfying the \sfpc\ can be isometrically embedded into a caterpillar. 
Let $(\metspace,d)$ be a metric space on $(n+1)$ objects satisfying the \sfpc, and consider the caterpillar in which its first $n$ objects (according to the order $\pi$ prescribed by the \sfpc) are embedded.

Notice that, in this caterpillar, the length of the spine edge that is incident to the vertices corresponding to $n-1$ and $n$ is given by $d_{1,n,n-1}$ (see Figure~\ref{fig:centipede_construction}). 
To extend the construction so as to include the element $n+1$, we subdivide this edge by inserting a new vertex $s$, and connect $s$ to a new leaf, which represents the image of $n+1$ in the tree. 
The lengths of the newly created edges are defined analogously to the base case and are illustrated in Figure~\ref{fig:centipede_construction}.

As before, by direct algebraic verification,  all newly created edge lengths are non-negative. 
It remains to check that the distances induced by the caterpillar are compatible with the metric $d$. 
Since all distances between pairs of vertices among $\{1,\ldots,n\}$ are preserved by the inductive hypothesis, it suffices to verify the distances involving the new vertex $n+1$.

For the pair $(n,n+1)$, we have
\[
d_{(n-1)n(n+1)} + d_{(n-1)(n+1)n} = d((n-1),n).
\]
For every $1 \le i \le n-1$, the distance in the caterpillar between $i$ and $n+1$ is:
\[
d(i,n) - d_{(n-1)n(n+1)} + d_{(n-1)(n+1)n}
= d(i,n) - d((n-1),n) + d((n-1),(n+1)),
\]
which coincides with $d(i,(n+1))$ by the \sfpc. 
Therefore, all pairwise distances involving the new vertex $n+1$ are correctly realized, and the extended caterpillar provides an isometric embedding of $(\metspace,d)$.

  In the opposite direction, assume that the metric space admits an isometric embedding into a caterpillar. 
Since the metric induced by a caterpillar is a tree metric, it necessarily satisfies the \fpc. 
Therefore, in order to show that a metric compatible with a caterpillar satisfies the \sfpc, it suffices to provide an ordering of its elements for which condition~\eqref{eq:sfpc} holds.

\begin{figure}[t]
  \centering
   \includegraphics[]{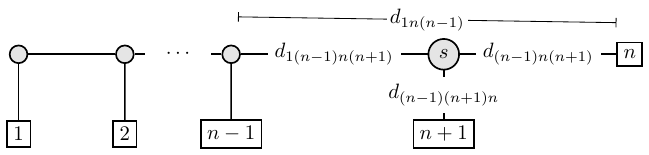}
  \caption{Construction of a caterpillar for a metric satisfying the \sfpc.}
  \label{fig:centipede_construction}
\end{figure} 

Consider the order $\pi$ induced by the spine of the caterpillar when traversed from one of its ends. 
More precisely, a vertex $i \in \metspace$ precedes a vertex $j$ in $\pi$ if the closest spine vertex to $i$ (possibly $i$ itself) appears before the closest spine vertex to $j$. Ties are broken arbitrarily. 
Consider any quadruple of elements of $\metspace$, \emph{w.l.o.g.}, \(1\leq_\pi 2\leq_\pi 3 \leq_\pi 4\).

As observed before, the distance between vertex $3$ and its closest neighbor on the spine is equal to $d_{234}$ (see Figure~\ref{fig:centipede_4}). 
This distance coincides with $d_{134}$. Hence, since
\(d_{234} = d_{134}\) we have:
\[
d(2,3)+d(3,4)-d(2,4)
=
d(1,3)+d(3,4)-d(1,4)\]
which implies \[
d(1,4)+d(2,3) = d(1,3)+d(2,4).
\]

Moreover, the distance between the spine vertices of vertices $3$ and $4$ is given by
\(d_{1234} \ge 0\)
(see Figure~\ref{fig:centipede_4}). 
Therefore,
\[
0 \le d_{1234}
= d(1,4) + d(2,3) - ( d(1,2) + d(3,4) )\]
which implies
\[
d(1,2) + d(3,4) \le d(1,4) + d(2,3).
\]
Combining this inequality with the equality obtained above, we conclude that
\[
d(1,2) + d(3,4)
\le d(1,4) + d(2,3)
= d(1,3) + d(2,4),
\]
which is precisely the condition required by the \sfpc.
\end{proof}
From the preceding discussion, any valid drawing of a strict Robinson space in a tree must satisfy the \sfpc. We thus obtain the following result.
\begin{corollary}
\label{thm:caterpillar}
If a strict Robinson space \((S,\dd)\) has a valid drawing in a tree \(T\), then \(T\) is a caterpillar. 
\end{corollary}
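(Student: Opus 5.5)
The plan is to combine the computation preceding Definition~\ref{def:sfpc} with Theorem~\ref{thm:sfpc_caterpillar}. Let $I:S\to T$ be a valid drawing of the strict Robinson space $(S,\dd)$ in a tree $T$. We take $S=\{1,\dots,n\}$ with the identity order compatible, and we write $d$ for the metric on $S$ pulled back through $I$, so $d(i,j)=d_T(I(i),I(j))$. Since $I$ is injective, $d$ is a genuine metric on $S$. Because it is the restriction of a tree metric, it satisfies the \fpc (the easy direction of Buneman's theorem, i.e.\ the existence of centers in a real tree).

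The first step is to verify the \sfpc for $d$ with $\pi$ the compatible order. Take $i<j<k<l$. The Robinson property gives $\dd(i,j)\le\dd(i,k)$ and $\dd(k,l)\le\dd(j,l)$, and strictness makes both inequalities strict. The valid drawing condition then yields $d(i,j)<d(i,k)$ and $d(k,l)<d(j,l)$, hence $d(i,j)+d(k,l)<d(i,k)+d(j,l)$. Now apply the \fpc in its symmetric form: of the three sums $d(i,j)+d(k,l)$, $d(i,k)+d(j,l)$, $d(i,l)+d(j,k)$, the two largest are equal. Since the first sum is strictly below the second, it cannot be one of two equal maxima strictly above the third unless the second and third coincide. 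The short case check is this. If $d(i,j)+d(k,l)$ were among the two largest, it would equal the maximum, which is at least $d(i,k)+d(j,l)$, a contradiction. So the two largest are the other two sums, and they are equal. This gives exactly~\eqref{eq:sfpc}, with the inequality even strict.

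The second step invokes Theorem~\ref{thm:sfpc_caterpillar}: $(S,d)$ embeds isometrically into a caterpillar. The last step, which I expect to be the main obstacle, is to pass from ``the metric is a caterpillar metric'' to ``$T$ is a caterpillar''. The tree $T$ could have extra irrelevant branches, so the statement must be read as concerning the subtree spanned by $I(S)$, namely the union of geodesics between image points. I would argue via uniqueness of the tree realizing a metric satisfying the \fpc (Hendy, cited earlier). The spanning subtree of $I(S)$, after suppressing degree-two vertices not in $I(S)$, is a minimal tree realization of $d$. The caterpillar from Theorem~\ref{thm:sfpc_caterpillar} contracts to a minimal realization as well. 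By uniqueness the two coincide, so the spanned subtree has caterpillar topology.

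One care point is that suppressing and removing degree-two or zero-length pieces of a caterpillar keeps it a caterpillar. This holds because contracting spine edges or pruning dangling parts preserves the property that removing leaves yields a path. With this interpretation the corollary follows.
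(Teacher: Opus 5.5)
Your proof is correct and follows the paper's route: strictness plus the valid-drawing inequalities force the \sfpc in the compatible order, exactly as in the computation preceding Definition~\ref{def:sfpc}, and Theorem~\ref{thm:sfpc_caterpillar} then yields a caterpillar realization of the induced metric. Your last step reads the claim as a statement about the subtree spanned by $I(S)$ and transfers the caterpillar shape to that subtree via uniqueness of minimal tree realizations (Hendy). The paper leaves this step implicit, and it is genuinely needed, since the literal statement fails if $T$ carries extra branches away from $I(S)$.
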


\section{Feasibility problem formulation}
\label{sec:lp_formulation_strict}
Using the characterization presented in the above section, in this section we present a feasibility formulation that allows us to decide whether a strict Robinson space admits a valid drawing in a caterpillar.

We use Corollary~\ref{thm:caterpillar} to formulate the problem of finding a valid drawing in a tree as a feasibility problem over a system of linear constraints. 
To this end, we introduce the following sets of variables, which represent the distances in a caterpillar (see Figure~\ref{fig:centipede}):

\begin{itemize}
  \item $\vertebras_i$ denotes the distance along the spine between the first spine vertex and the $i$-th spine vertex.
  \item $\patas_i$ denotes the length of the leg hanging from the $i$-th spine vertex.
\end{itemize}

Let $(\metspace,d)$ be a metric space on $n$ elements satisfying the \sfpc, and let $\pi$ be the corresponding order. 
Consider a caterpillar with $n$ leaves, ordered according to the appearance of the point where they hang, when the spine is traversed from one of its ends. 
If we inject the $i$-th element of $\metspace$ (with respect to $\pi$) into the $i$-th leaf of the caterpillar, then, for any $i<j$, the distance between $i$ and $j$ in the caterpillar is given by
\[
d(i,j)=\vertebras_j-\vertebras_i + \patas_j +\patas_i.
\]

\begin{figure}[t]
  \centering
    \includegraphics[]{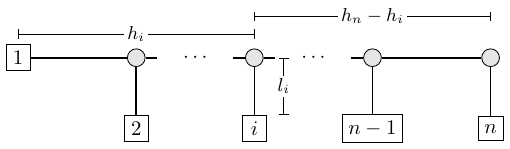}
    \caption{Variable definitions describing a caterpillar graph.}
  \label{fig:centipede}
\end{figure}

 We may assume  $\vertebras_1=\patas_1=0$.

  Moreover, since $(S,\dd)$ is a strict Robinson space, we have $\dd(1,i)<\dd(1,i+1)$ for all $1\le i <n$, and therefore $d(1,i)<d(1,i+1)$ in any valid drawing.
  This implies that, in a caterpillar drawing of $(S,\dd)$, elements must appear at strictly increasing distance from element $1$ according to the compatible order.
  A symmetric argument applies when considering distances from element $n$.
  Consequently, the conditions for a valid  drawing in a caterpillar need only be verified using the left and right centers of each pair.
  Specifically, for each pair $i,j\in S$ with $1\le i < j \le n$, we must have
\begin{equation*}
  d(i,\LC(i,j)) < d(\LC(i,j),j)
  \qquad\text{and}\qquad
  d(i,\RC(j,i)) > d(\RC(j,i),j).
\end{equation*}

  In terms of the variables $\vertebras_i$ and $\patas_i$, these conditions are equivalent to
\begin{equation}
\label{eq:centipede_conditions}
\vertebras_i + \vertebras_j - 2\vertebras_{\LC(i,j)} - \patas_i + \patas_j > 0,
\qquad
-\vertebras_i - \vertebras_j + 2\vertebras_{\RC(j,i)} + \patas_i - \patas_j > 0.
\end{equation}

Note that if a vector $\xx=(\mathbf{\vertebras},\mathbf{\patas})^\top=(\vertebras_2,\ldots,\vertebras_n,\patas_2,\ldots,\patas_n)^\top \in \mathbb{R}^{2n-2}_+$
satisfies the system \eqref{eq:centipede_conditions}, then the vector  $\alpha\mathbf{x}+\beta\mathbf{1}=(\alpha \vertebras_2+\beta,\ldots,\vertebras_n+\beta,\alpha \patas_2+\beta,\ldots,\alpha \patas_n+\beta)^\top$ also satisfies the same system, for every $\alpha\in\RR^+$ and $\beta\in\RR$.
 This invariance property allows us to simplify the formulation by removing the non-negativity constraints on the variables. 
Indeed, given any solution (possibly with negative entries) satisfying \eqref{eq:centipede_conditions}, one can add a sufficiently large constant $\beta$ to all variables in order to obtain a non-negative solution.

Therefore, the existence of a valid drawing of a strict Robinson space in a caterpillar is equivalent to showing that the following set is non-empty:
\begin{equation}\label{eq:CS}
    \CC_S=\!\left\{ \xx=(\mathbf{\vertebras},\mathbf{\patas})^\top \!\! \in \mathbb{R}^{2n-2}:
    \hspace{-8pt}
    \begin{array}{l@{\hskip 0.03cm}l}
        \phantom{-}\vertebras_i + \vertebras_j - 2\vertebras_{\LC(i,j)} -\patas_i+\patas_j &>0 \\
        -\vertebras_i - \vertebras_j + 2\vertebras_{\RC(j,i)} +\patas_i-\patas_j &> 0
    \end{array}\!
    , \forall \, 1\le i < j \le n \right\}.
\end{equation}

\paragraph{Matrix formulation.} \label{sec:matrix_formulation}
Since the set $\CC_S$ is defined by linear constraints, it can be equivalently described in matrix form as $\mathbf{A_S}\,\xx>0$ for some matrix $\mathbf{A_S}\in \RR^{n(n-1)\times (2n-2)}$.
The columns of $\mathbf{A_S}$ are indexed by the variables $\vertebras_k$ and $\patas_k$, with $k\in\{2,\ldots,n\}$, while the rows correspond to the constraints and are indexed by the ordered pairs $(i,j)\in [n]^2$ with $i\neq j$.

  By the definition of $\CC_S$, in the constraint indexed by a pair $(i,j)$ the only variables that may have a nonzero coefficient are those associated with $i$, $j$, and with the left or right center of the pair. A special situation occurs when the left or right center coincides with $i$ or $j$, that is, when $\LC(i,j)=i$ or $\RC(j,i)=i$. In these cases, the inequalities in \eqref{eq:centipede_conditions} simplify to
$-\vertebras_i + \vertebras_j - \patas_i + \patas_{j}  > 0 $ when $i<j$, and $\vertebras_i - \vertebras_j - \patas_i + \patas_{j} > 0 $  when $i>j$.
  Figure \ref{fig:matrix_A} depicts the matrix $\mathbf{A_S}$ for the example presented in Figure~\ref{fig:strict-mapping}.

This matrix formulation allows us to derive an alternative characterization of strict Robinson spaces admitting a valid drawing in a caterpillar. The characterization relies on the classical theorem of alternatives due to Gordan.

\begin{theorem}[Gordan's alternative {\cite{gordan1873ueber}}]
  \label{thm:gordan}
Let $\mathbf{A}$ be an $m\times n$ real matrix. Exactly one of the following statements holds:
\begin{enumerate}
    \item There exists $\mathbf{x}\in\mathbb{R}^n$ such that $\mathbf{Ax}>0$.
    \item There exists a nonzero vector $\mathbf{y}\in\mathbb{R}^m$ such that $\mathbf{A}^\top \mathbf{y}=0$ and $\mathbf{y}\ge 0$.
\end{enumerate}
\end{theorem}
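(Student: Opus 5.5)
Since the statement is the classical theorem of alternatives, I would prove it directly in two parts: first that the two statements cannot hold simultaneously, and then that at least one of them always holds.

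\emph{Mutual exclusivity.} Suppose that $\mathbf{x}$ satisfies $\mathbf{Ax}>0$ and that $\mathbf{y}\ge 0$, $\mathbf{y}\neq 0$, satisfies $\mathbf{A}^\top\mathbf{y}=0$. Then
\[
0=(\mathbf{A}^\top\mathbf{y})^\top\mathbf{x}=\mathbf{y}^\top(\mathbf{Ax})=\sum_{i=1}^m y_i(\mathbf{Ax})_i .
\]
Every term of the last sum is non-negative. At least one term is strictly positive, because some $y_i>0$ and every $(\mathbf{Ax})_i>0$. This is a contradiction, so the two statements cannot both hold.

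\emph{At least one holds.} Assume that no $\mathbf{x}$ satisfies $\mathbf{Ax}>0$. Consider two sets in $\mathbb{R}^m$:
\begin{itemize}
\item the linear subspace $L=\{\mathbf{Ax}:\mathbf{x}\in\mathbb{R}^n\}$;
\item the open positive orthant $P=\{\mathbf{p}\in\mathbb{R}^m:\mathbf{p}>0\}$.
\end{itemize}
Both sets are convex, $P$ is open and non-empty, and by assumption $L\cap P=\emptyset$. The finite-dimensional separating hyperplane theorem (for two disjoint convex sets, one of which is open) gives a nonzero $\mathbf{y}\in\mathbb{R}^m$ and a scalar $c$ with
\[
\mathbf{y}^\top\mathbf{u}\le c\le \mathbf{y}^\top\mathbf{p}\qquad\text{for all } \mathbf{u}\in L,\ \mathbf{p}\in P .
\]

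I would then extract the two required properties of $\mathbf{y}$ by scaling.
\begin{itemize}
\item \emph{Orthogonality.} $L$ is a subspace, so the linear functional $\mathbf{u}\mapsto \mathbf{y}^\top\mathbf{u}$ is bounded above on $L$ only if it vanishes there. Hence $\mathbf{y}^\top\mathbf{Ax}=0$ for all $\mathbf{x}$, which means $\mathbf{A}^\top\mathbf{y}=0$. In particular $0\in L$ gives $c\ge 0$.
\item \emph{Non-negativity.} Take $\mathbf{p}=t\mathbf{e}_i+\varepsilon\mathbf{1}$ with $t,\varepsilon>0$, which lies in $P$. Letting $\varepsilon\to 0$ gives $t\,y_i\ge c\ge 0$, so $y_i\ge 0$ for every $i$.
\end{itemize}
Thus $\mathbf{y}\ge 0$, $\mathbf{y}\neq 0$ and $\mathbf{A}^\top\mathbf{y}=0$, which is the second statement.

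The only delicate point is the separation step, since $P$ is not closed. I would invoke the version of the theorem for disjoint convex sets with one of them open, which needs no closedness and gives a nonzero normal vector. An alternative route avoids this issue. By homogeneity of $\mathbf{x}$, the system $\mathbf{Ax}>0$ is feasible if and only if $\mathbf{Ax}\ge\mathbf{1}$ is feasible. One can then apply Farkas' lemma, or LP duality for maximizing $\mathbf{1}^\top\mathbf{y}$ subject to $\mathbf{A}^\top\mathbf{y}=0$ and $\mathbf{y}\ge 0$, to obtain $\mathbf{y}\ge 0$ with $\mathbf{A}^\top\mathbf{y}=0$ and $\mathbf{1}^\top\mathbf{y}>0$; the last condition gives $\mathbf{y}\neq 0$.
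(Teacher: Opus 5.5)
Your proof is correct. The exclusivity step is exactly right. In the existence step you handle the one subtle point properly: you separate the range $L$ of $\mathbf{A}$ from the \emph{open} positive orthant using the version of the separation theorem that needs no closedness. You then get $\mathbf{A}^\top\mathbf{y}=0$ because a linear functional bounded above on a subspace must vanish on it, and $\mathbf{y}\ge 0$ by testing against points of the orthant.

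The paper gives no proof of this statement. It quotes Gordan's alternative from the 1873 reference and uses it only as a black box, to turn feasibility of $\mathbf{A_S}\mathbf{x}>0$ into the kernel condition behind Lemma~\ref{thm:kernel_characterization}. So your proposal does not compete with an argument in the paper; it supplies the justification the paper takes on citation.

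Of your two routes, the separation proof is the more direct and geometric, needing only the finite-dimensional separation theorem. The second route fits the paper's computational viewpoint better. Homogenizing $\mathbf{Ax}>0$ to $\mathbf{Ax}\ge\mathbf{1}$ is the same scaling idea the paper uses when it replaces strict inequalities by an $\varepsilon$-margin in Section~\ref{sec:tree_valid_drawing}, and when it notes that the feasible set of the caterpillar system is invariant under positive scaling. If you present that route, state the closing detail explicitly: $\mathbf{y}=0$ is feasible for the maximization of $\mathbf{1}^\top\mathbf{y}$. Hence infeasibility of its dual system $\mathbf{Ax}\ge\mathbf{1}$ forces unboundedness, which yields a feasible $\mathbf{y}$ with $\mathbf{1}^\top\mathbf{y}>0$.
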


In our setting, Theorem~\ref{thm:gordan} can be restated as follows. 
Let $(S,\dd)$ be a strict Robinson space, then exactly one of the following statements holds:
\begin{enumerate}
\item[\emph{1'.}] $(S,\dd)$ admits a valid drawing in a caterpillar. 
\item[\emph{2'.}] There exists a nonzero vector $\mathbf{y}\in \RR^{n(n-1)}$ such that $\mathbf{A_S}^\top \mathbf{y}=0$ and $\mathbf{y} \ge 0$.
\end{enumerate}
In other words, a strict Robinson space $(S,\dd)$ admits a valid drawing in a caterpillar if and only if the kernel of $\mathbf{A_S}^\top$ contains no nonzero nonnegative vector.

\begin{figure}[t]
  \centering
  \includegraphics[]{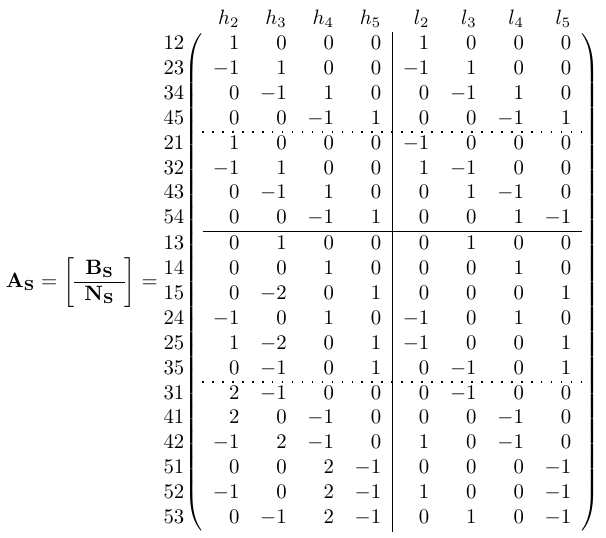}
 \caption{Matrix definition of the problem for the strict Robinson space example of Figure \ref{fig:strict-mapping} (b). In the figure, matrix rows represent the constraints induced by conditions \ref{eq:centipede_conditions}.}
  \label{fig:matrix_A}
\end{figure}

\paragraph{Kernel characterization.}
\label{sec:kernel_characterization}
 Consider now a partition of the rows (constraints) of the matrix $\mathbf{A_S}$ into two submatrices $\mathbf{B_S}$ and $\mathbf{N_S}$, where $\mathbf{B_S}$ consists of the rows indexed by consecutive pairs,
\[
\{12,\ldots,(n-1)n,\,21,\ldots,n(n-1)\},
\]
 and $\mathbf{N_S}$ contains the remaining constraints (see Figure~\ref{fig:matrix_A}). 
For consecutive pairs we have $\LC(i,i+1)=i$ and $\RC(i+1,i)=i+1$. 
Consequently, the submatrix $\mathbf{B_S}$ has the block structure shown in Figure \ref{eq:tn}.
\begin{figure}[t]
    \centering
    \includegraphics[]{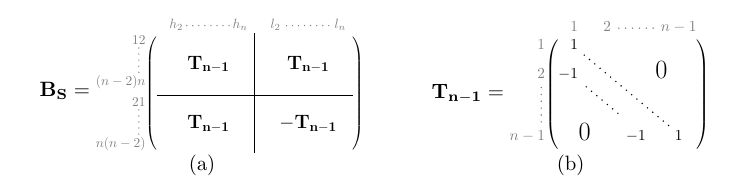}
    \caption{(a) Structure of matrix $\mathbf{B_S}$. (b) Structure of matrices $\mathbf{T_{n-1}}$.}
    \label{eq:tn}
\end{figure}
We emphasize that $\mathbf{B_S}$ depends only on $n$ and is independent of the specific instance $(S,\dd)$. 
Moreover, $\mathbf{B_S}$ has full rank and is therefore invertible.

 Let us consider a vector $\mathbf{y} \in \ker(\mathbf{A_S}^\top)$ and partition it as
\[
\mathbf{y}^\top = (\mathbf{y_B}^\top \mid \mathbf{y_N}^\top),
\]
according to the block decomposition of $\mathbf{A_S}$ into $\mathbf{B_S}$ and $\mathbf{N_S}$. 

  With this partition, every vector in $\ker(\mathbf{A_S}^\top)$ is completely determined by its components in $\mathbf{y_N}$. 
Indeed, the equation $\mathbf{A_S}^\top \mathbf{y} = 0$ can be written as
\[
\mathbf{B_S}^\top \mathbf{y_B} + \mathbf{N_S}^\top \mathbf{y_N} = 0,
\]
or, equivalently,
\[
\mathbf{y_B} = \mathbf{Y_S}\, \mathbf{y_N},
\]
where
\[
\mathbf{Y_S} := -(\mathbf{B_S}^\top)^{-1} \mathbf{N_S}^\top.
\]

To compute $\mathbf{Y_S}$, we observe that the inverse of the matrix $\mathbf{B_S}^\top$ admits the explicit expression
  \[
  \begingroup
  \renewcommand{\arraystretch}{1.5} 
  (\mathbf{B_S}^\top)^{-1}=\frac12
  \left(
    \begin{array}{c|r}
    \mathbf{T_{n-1}}^{-\top} & \mathbf{T_{n-1}}^{-\top}           \\
      \hline
    \mathbf{T_{n-1}}^{-\top} & -\mathbf{T_{n-1}}^{-\top}
    \end{array}
  \right)
  \endgroup
,
  \text{where}\quad 
  \mathbf{T_{n-1}}^{-\top} = 
  \resizebox{!}{.07\linewidth}{$
  \NiceMatrixOptions{xdots/shorten = 0.6 em,
    code-for-first-row = \color{gray},
    code-for-first-col = \color{gray},}
  \begin{pNiceArray}{cccc}[]

    1               & \Cdots &   & 1            \\

    \phantom{-1}    & \Ddots &   & \Vdots       \\

    \bzero          &        &   &              \\

                    &        &   & 1            \\ 
  \end{pNiceArray}
 $ }.
\]

Figure~\ref{fig:B_example} depicts the matrix $\mathbf{Y_S}$ for the example in Figure \ref{fig:strict-mapping}.
We summarize the above discussion in the following lemma.
  
\begin{lemma}\label{thm:kernel_characterization}
  A strict Robinson space $(S,\dd)$ on $n$ elements admits a valid drawing in a caterpillar if and only if the following system is infeasible:
  \begin{equation}
    \label{eq:lpS}\tag{LP-S}
    \mathbf{y_N}\in \RR^{(n-1)(n-2)}\colon 
    \quad
    \mathbf{Y_S}\, \mathbf{y_N}\ge 0,\quad \mathbf{y_N}\ge 0 ,\quad \mathbf{y_N}\neq 0
  \end{equation}

\end{lemma}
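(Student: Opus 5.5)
The plan is to combine the restated Gordan alternative (statements 1' and 2' after Theorem~\ref{thm:gordan}) with the block decomposition of $\mathbf{A_S}$ into $\mathbf{B_S}$ and $\mathbf{N_S}$. By that restatement, $(S,\dd)$ admits a valid drawing in a caterpillar if and only if there is no nonzero $\mathbf{y}\ge 0$ with $\mathbf{A_S}^\top\mathbf{y}=0$. So it suffices to show that such a $\mathbf{y}$ exists exactly when \eqref{eq:lpS} is feasible. Before this, I would recall why 1' is really equivalent to feasibility of $\mathbf{A_S}\xx>0$. This rests on Corollary~\ref{thm:caterpillar}, which says any tree drawing is a caterpillar drawing, and on the reduction of the valid-drawing conditions to the left and right centers, yielding \eqref{eq:centipede_conditions}. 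I take that reduction as given from the preceding discussion.

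First, I would establish the kernel parametrization carefully. Since the rows of $\mathbf{A_S}$ are permuted so that $\mathbf{A_S}^\top=(\mathbf{B_S}^\top\mid\mathbf{N_S}^\top)$, the equation $\mathbf{A_S}^\top\mathbf{y}=0$ reads $\mathbf{B_S}^\top\mathbf{y_B}+\mathbf{N_S}^\top\mathbf{y_N}=0$. The main point to check is that $\mathbf{B_S}$ is square and invertible. It has $2(n-1)$ rows, from the consecutive pairs in both directions, and $2n-2$ columns, namely $\vertebras_2,\dots,\vertebras_n,\patas_2,\dots,\patas_n$.

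For a consecutive pair $(i,i+1)$ the two rows are $\pm(\vertebras_{i+1}-\vertebras_i)+(\patas_{i+1}-\patas_i)$, with $\vertebras_1=\patas_1=0$ eliminated. Thus $\mathbf{B_S}$ has the form $\begin{pmatrix}\mathbf{T}&\mathbf{T}\\ -\mathbf{T}&\mathbf{T}\end{pmatrix}$ up to sign and ordering conventions, with $\mathbf{T}=\mathbf{T_{n-1}}$ bidiagonal with unit diagonal. I would verify invertibility by multiplying by the stated block inverse, using that $\mathbf{T_{n-1}}^{-\top}$ is the upper-triangular all-ones matrix. This is the step I expect to require the most care, since the signs must match the precise row and column convention of Figure~\ref{eq:tn}. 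Still, it is a direct computation.

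Given invertibility, the map $\mathbf{y_N}\mapsto(\mathbf{Y_S}\mathbf{y_N},\mathbf{y_N})$ is a bijection from $\RR^{(n-1)(n-2)}$ onto $\ker\mathbf{A_S}^\top$. The dimension count holds because $n(n-1)-2(n-1)=(n-1)(n-2)$.

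Finally, I would translate the sign conditions. The condition $\mathbf{y}\ge0$ is equivalent to $\mathbf{y_B}=\mathbf{Y_S}\mathbf{y_N}\ge 0$ together with $\mathbf{y_N}\ge0$. For nonzeroness, if $\mathbf{y_N}=0$ then $\mathbf{y_B}=0$, so $\mathbf{y}\neq0$ iff $\mathbf{y_N}\neq0$. Hence a nonzero nonnegative kernel vector exists iff \eqref{eq:lpS} is feasible, and by Gordan's alternative the drawing exists iff \eqref{eq:lpS} is infeasible. The nonzero constraint can also be replaced by the normalization $\mathbf{1}^\top\mathbf{y_N}=1$, making \eqref{eq:lpS} a genuine LP. This follows from homogeneity and does not affect the proof.
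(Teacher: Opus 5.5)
Your proposal is correct and follows the paper's own route: Gordan's alternative for $\mathbf{A_S}$, invertibility of the instance-independent block $\mathbf{B_S}$, and the resulting parametrization $\mathbf{y_B}=\mathbf{Y_S}\mathbf{y_N}$ of $\ker\mathbf{A_S}^\top$; you also make explicit how $\mathbf{y}\ge 0$ and $\mathbf{y}\neq 0$ turn into the conditions of \eqref{eq:lpS}, which the paper leaves implicit. One harmless slip: the consecutive-pair rows are $(\vertebras_{i+1}-\vertebras_i)\pm(\patas_{i+1}-\patas_i)$, so $\mathbf{B_S}=\begin{pmatrix}\mathbf{T_{n-1}}&\mathbf{T_{n-1}}\\ \mathbf{T_{n-1}}&-\mathbf{T_{n-1}}\end{pmatrix}$ (the form that matches the stated inverse), but since you use $\mathbf{B_S}$ only for invertibility, which row signs do not affect, the argument is unaffected.
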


  Note that Lemma~\ref{thm:kernel_characterization} provides an equivalent description of the set $\CC_S$ defined in \eqref{eq:CS}. 
However, it yields a more explicit structural characterization of strict Robinson spaces that admit a valid drawing in a caterpillar. 

 \begin{figure}[t]\centering
\includegraphics[]{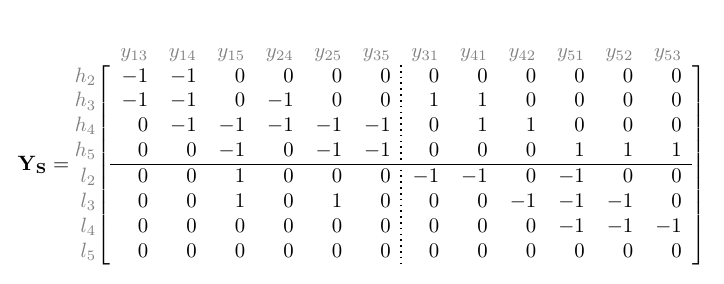}
\caption{Matrix $Y_S$ for the strict Robinson space example of Figure~\ref{fig:strict-mapping} (b).}
\label{fig:B_example}
\end{figure}

\section{Applications of Lemma \ref{thm:kernel_characterization}}
In this section, we present three applications of Lemma \ref{thm:kernel_characterization} that yield general results concerning the problem of finding a valid drawing in a tree for Robinson spaces.

  \paragraph*{Redundant triples.}
  \label{sec:redundant-triples}

Recall that the vector $\mathbf{y_N}$ is indexed by the constraints of the matrix $\mathbf{A_S}$ associated with pairs $(i,j)$ such that $|i-j|>1$. 
For any pair $1 \le i < j \le n$, the corresponding column of $\mathbf{Y_S}$ depends on the type of constraint represented by $(i,j)$. 
In fact, there are exactly four possible column structures, which are displayed in Figure~\ref{fig:B_sctructute}.

 We say that the column $(\mathbf{Y_S})_{ij}$, corresponding to the constraint associated with the pair $(i,j)$, is \emph{redundant} if imposing the condition ${\mathbf{y_N}}_{ij}=0$ does not affect the feasibility of problem \eqref{eq:lpS}. 
That is, the column $(\mathbf{Y_S})_{ij}$ is redundant if whenever there exists a solution to \eqref{eq:lpS}, there also exists a solution $\tilde{\mathbf{y}}_{\mathbf{N}}$ such that $\tilde{\mathbf{y}}_{\mathbf{N}ij}=0$.

\begin{table}[t]

\begin{tabular}{l c c c c c c c }
\hline
$n$  & 100 & 200 & 300 & 400 & 500 & 600 & 700 \\
$\hat{\mu}$ & 21.86\% & 20.86\% & 20.70\% & 20.49\% & 20.43\% & 20.20\% & 20.43\% \\
$\hat{\sigma}/\hat{\mu}$ & 8.78\% & 6.59\% & 5.57\% & 4.84\% & 4.36\% & 4.08\% & 3.56\% \\
\hline
\end{tabular}

\caption{Comparison between the number of constraints in the original  and reduced linear program.
The first row indicates the number of elements of the Robinson spaces ($n$).
The second row  reports the average, in a sample of one hundred spaces, of the proportion of constraints remained after removing redundant constraints ($\hat{\mu}$).
Last row shows the coefficient of variation, computed as the sample standard deviation normalized by the average ($\hat{\sigma}/\hat{\mu}$). 
}
\label{tabla:lp2-lp3}
\end{table}

 A first type of redundant columns corresponds to constraints for which either the left center or the right center of the pair coincides with one of the indices of the pair (see Figures~\ref{fig:B_sctructute}(a) and~\ref{fig:B_sctructute}(b)). 
Let $(\mathbf{Y_S})_{ij}$ be a column of this type and suppose that the vector $\mathbf{y_N}$ is a solution of problem~\eqref{eq:lpS}. 

Observe that the column $(\mathbf{Y_S})_{ij}$ contains only non–positive entries. Hence, $(\mathbf{y_N})_{ij}$ cannot be the only nonzero component of $\mathbf{y_N}$, since otherwise we would have
\[
\mathbf{Y_S}\, \mathbf{y_N} = (\mathbf{Y_S})_{ij}\,(\mathbf{y_N})_{ij} < 0,
\]
which contradicts the condition $\mathbf{Y_S}\, \mathbf{y_N} \ge 0$ together with $\mathbf{y_N} \neq 0$. 

Therefore, setting $(\mathbf{y_N})_{ij}=0$ yields a vector $\mathbf{y_N'} \ge 0$ that is still nonzero and satisfies
\[
\mathbf{Y_S}\, \mathbf{y_N'} \;=\; \mathbf{Y_S}\, \mathbf{y_N} - (\mathbf{y_N})_{ij}\,(\mathbf{Y_S})_{ij}
\;\ge\; \mathbf{Y_S}\, \mathbf{y_N} 
\;\ge\; 0.
\]
Consequently, $(\mathbf{Y_S})_{ij}$ is redundant in the sense of the definition above.

 The second type of redundant columns corresponds to constraints associated with pairs $(i,j)$ with $i<j$ for which there exists another pair $(i',j')$, with $i'<j'$, such that
\[
\LC(i,j)=\LC(i',j'), \qquad i' \le i, \qquad j' \le j
\]
(see, for instance, the columns corresponding to $(2,5)$ and $(1,5)$ in Figure~\ref{fig:B_example}). 
In this situation, the column $(\mathbf{Y_S})_{ij}$ is component-wise dominated by the column $(\mathbf{Y_S})_{i'j'}$, that is,
\[
(\mathbf{Y_S})_{ij} \le (\mathbf{Y_S})_{i'j'}.
\]

Let $\mathbf{y_N}$ be a solution of \eqref{eq:lpS}. Define a new vector $\mathbf{y_N'}$ by setting
\[
(\mathbf{y_N'})_{ij}=0, 
\qquad 
(\mathbf{y_N'})_{i'j'}=(\mathbf{y_N})_{ij}+(\mathbf{y_N})_{i'j'},
\]
and leaving all other components unchanged. 
Clearly, $\mathbf{y_N'} \ge 0$ and $\mathbf{y_N'} \neq 0$. Moreover,
\[
\mathbf{Y_S} \mathbf{y_N'}
= \mathbf{Y_S} \mathbf{y_N} + \bigl((\mathbf{Y_S})_{i'j'} - (\mathbf{Y_S})_{ij}\bigr)(\mathbf{y_N})_{ij}
\;\ge\; \mathbf{Y_S} \mathbf{y_N}
\;\ge\; 0,
\]
where the inequality follows from the fact that $(\mathbf{Y_S})_{i'j'}-(\mathbf{Y_S})_{ij}$ is component-wise nonnegative. 
Hence, setting $(\mathbf{y_N})_{ij}=0$ does not affect feasibility, and the column $(\mathbf{Y_S})_{ij}$ is redundant.

An analogous argument shows that a column $(\mathbf{Y_S})_{ji}$ with $i<j$ is redundant if there exists another pair $(j',i')$ such that
\[
\RC(j,i)=\RC(j',i'), \qquad i' \ge i, \qquad j' \ge j
\]
In this case, the column $(\mathbf{Y_S})_{ji}$ is component-wise dominated by $(\mathbf{Y_S})_{j'i'}$, and the same redistribution argument applies.

\begin{figure}[t]
    \centering
 \includegraphics[]{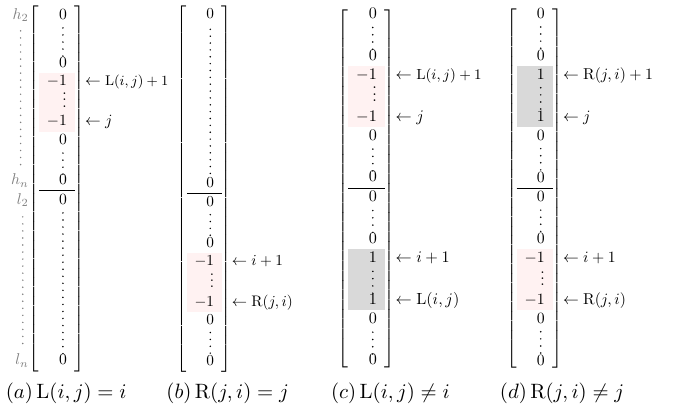}
    \caption{Structure types of columns in $\mathbf{Y_S}$.}
   \label{fig:B_sctructute}
\end{figure}

\label{sec:experimental}
  
Removing redundant triples significantly reduces the number of constraints in the linear program.
To evaluate this reduction, we conduct an experimental analysis comparing the number of constraints in the original matrix $\mathbf{Y_S}$ with those remaining after eliminating the redundant columns.
For each size in an increasing sequence of space dimensions, we use a random generator to sample one hundred distinct Robinson spaces.
For each sampled space, we compute the proportion of columns retained after removing redundancies.
  Table~\ref{tabla:lp2-lp3} presents the experimental results.
  The number of remaining constraints is approximately 20\% of the original count, regardless of space size.
  Moreover, the coefficient of variation ranges between  3-9\%, demonstrating that this reduction rate is highly consistent across all generated instances.
 
  \paragraph{Existence of Valid Drawings in Trees.}
  \label{sec:characterization-applications}
We now exploit the different characterizations of the existence of valid drawings in a caterpillar in order to identify subclasses of Robinson spaces for which the (non-)existence of such a drawing is guaranteed.

We begin by providing structural conditions that are sufficient to ensure the existence of a valid drawing in a caterpillar. 
In particular, we show that strict Robinson spaces whose left and right centers are suitably correlated with the midpoints of their corresponding intervals always admit a valid drawing in a caterpillar.

To prove this result, we first establish a structural property of the center matrix for strict Robinson spaces: the left and right centers of any pair are necessarily adjacent. Specifically, for every pair \(i < j\) in a strict Robinson space,
\[
\LC(i,j) = \RC(i,j) - 1.
\]
By definition, the right center \(\RC(i,j) = k\) is the smallest index in \(\{i,\ldots,j\}\) satisfying \(d(k,j) < d(k,i)\). 
Therefore, the preceding element $(k-1)$ must satisfies  \(d(k-1,j) \ge d(k-1,i)\), as otherwise this would contradict the minimality of \(\RC(i,j)\). 
Because the dissimilarities are strictly ordered (no ties), equality is impossible; therefore,
\[
d(k-1,j) > d(k-1,i).
\]
This inequality means that \(k-1\) is closer to \(i\) than to \(j\), thus \(k-1 = \LC(i,j)\), establishing the adjacency property.
This structural constraint has an important implication: the position of one center completely determines the other, reducing the degrees of freedom in the center matrix to a single family of values. This observation proves crucial in the proof of the following theorem.

  \begin{theorem}
    \label{thm:center-feasible}
    Let $(S,\dd)$ be a strict Robinson dissimilarity space. 
If for every pair $i<j$ we have
\[
\LC(i,j)\le \frac{i+j}{2} \le \RC(j,i),
\]
then $(S,\dd)$ admits a valid drawing in a caterpillar.
  \end{theorem}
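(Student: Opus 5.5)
The plan is to exhibit an explicit point of $\CC_S$ from \eqref{eq:CS}, not to argue through Lemma~\ref{thm:kernel_characterization}. The hypothesis is about midpoints, and these are exactly the spine positions of a caterpillar whose spine vertices are equally spaced. So I would first take the ``uniform'' caterpillar $\vertebras_k=k-1$, $\patas_k=0$.

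\emph{Step 1 (uniform caterpillar).} For this vector the two constraints of a pair $i<j$ become
\[
i+j-2\LC(i,j)>0 \qquad\text{and}\qquad 2\RC(j,i)-i-j>0 .
\]
The hypothesis gives both with ``$\ge$''. Hence $\mathbf{x}_0=(\vertebras,\patas)$ satisfies $\mathbf{A_S}\mathbf{x}_0\ge 0$. A row can vanish only if $j-i$ is even and the midpoint $m=(i+j)/2$ is itself a center.

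Since $\LC(i,j)=\RC(j,i)-1$ in a strict space, exactly one of the two rows of such a pair vanishes. Call the pair \emph{left-tight} if $\LC(i,j)=m$ and \emph{right-tight} if $\RC(j,i)=m$. Every other row of $\mathbf{A_S}\mathbf{x}_0$ is at least $1$. (When $j-i$ is odd, $i+j-2\LC(i,j)\ge 1$; when $j-i$ is even and the pair is not tight, the value is at least $2$.)

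\emph{Step 2 (perturbation).} It then suffices to find a correction $\mathbf{x}_1$ with $(\mathbf{A_S}\mathbf{x}_1)_r>0$ on every tight row $r$. Then $\mathbf{x}_0+\varepsilon\mathbf{x}_1$, with $\varepsilon>0$ small enough that the rows bounded below by $1$ stay positive, lies in $\CC_S$, and the invariance noted after \eqref{eq:CS} makes it non-negative. I would try a correction using the legs only, $\mathbf{x}_1=(0,\mathbf{c})$. With this choice a left-tight pair requires $c_j>c_i$ and a right-tight pair requires $c_i>c_j$. So the task reduces to finding a real labelling $c$ of $S$ consistent with a family of strict comparisons $c_i\lessgtr c_j$, one for each tight pair. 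This is possible if and only if the directed graph $G$ they define is acyclic.

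\emph{Step 3 (acyclicity, the main obstacle).} I expect the real difficulty to be showing that $G$ has no directed cycle, using the Robinson structure. The tools are the monotonicity of the matrix of centers along rows and columns, strictness, and the adjacency $\LC=\RC-1$. For example, suppose $(i,j)$ is left-tight with midpoint $m$ and $(i',j')$ is tight with $i'\le i$, $j'\le j$. Monotonicity gives $\LC(i',j')\le m$, which restricts the type of $(i',j')$ according to the position of its midpoint. From restrictions of this kind I would try to prove a ``sweep'' property: if the pairs are processed by increasing midpoint and then by increasing length, every arc points forward in a suitable linear order. If acyclicity fails in general, my fallback is to let the spine absorb part of the correction, setting $\vertebras_k=k-1+\varepsilon u_k$. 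A tight pair centred at $m$ then contributes $\pm(u_i+u_j-2u_m)$ in addition to the leg terms. I would choose $u$ together with $c$, proceeding by induction on $n$ as in the proof of Theorem~\ref{thm:sfpc_caterpillar}: add element $n$ and fix $u_n,c_n$ from the tight pairs $(i,n)$ only. Those pairs have pairwise distinct midpoints, and the one-parameter freedom in $(u_n,c_n)$ should then satisfy them simultaneously.
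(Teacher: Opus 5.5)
\textbf{Step 3 is a genuine gap, and it cannot be closed: the statement is false for $n\ge 7$.}

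Your Step 1 is correct. It is in fact the primal form of the paper's argument: the column sums $2(\LC(i,j)-m_{ij})$ and $2(m_{ij}-\RC(j,i))$ used there are exactly minus the values of the corresponding constraints at your uniform caterpillar. You also isolate what that argument passes over. Under the hypothesis every pair with $j-i$ even has a vanishing row, since $\LC\le m\le \RC=\LC+1$ forces $m\in\{\LC,\RC\}$. The paper passes from ``the entries of $\mathbf{Y_S}\mathbf{y_N}$ have non-positive sum'' to ``$\mathbf{Y_S}\mathbf{y_N}$ cannot be non-negative''. That step ignores $\mathbf{Y_S}\mathbf{y_N}=0$, which \eqref{eq:lpS} allows and which amounts to a non-negative dependency among your tight rows. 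Such dependencies do occur.

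Any $\rho(i,j)=(j-i)+\delta_{ij}$ with $\delta_{ij}\in(-\tfrac12,\tfrac12)$, distinct among pairs of equal length, is strict Robinson and satisfies the hypothesis. Meanwhile the $\delta$'s decide every equidistant comparison $\rho(m,m-a)$ versus $\rho(m,m+a)$. On $\{1,\dots,7\}$ take $\delta_{k,k+1}=k/10$, $(\delta_{13},\delta_{24},\delta_{35},\delta_{46},\delta_{57})=(0.3,0.25,0.2,0.15,0.1)$, $\delta_{14}=0.1$, $\delta_{47}=0.2$, and the remaining $\delta$'s arbitrary. Then $(2,4),(3,5),(4,6),(1,7)$ are left-tight and $(1,5),(2,6),(3,7)$ are right-tight. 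These seven rows of $\mathbf{A_S}$ sum to the zero row, so no $\mathbf{x}$ makes them all positive.

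You can also see this directly. A valid drawing needs $d_{23}<d_{34}<d_{45}<d_{56}$, $d_{14}<d_{47}$, $d_{57}<d_{35}<d_{13}$ and $d_{46}<d_{24}$; each is a comparison seen from $3$, $4$ or $5$. Hence
\[
d_{14}+d_{23}+d_{46}+d_{57}<d_{13}+d_{24}+d_{47}+d_{56}.
\]
In any tree, however, the argument preceding Definition~\ref{def:sfpc}, applied to $1<2<3<4$ and to $4<5<6<7$, gives $d_{13}+d_{24}=d_{14}+d_{23}$ and $d_{47}+d_{56}=d_{46}+d_{57}$. So this space satisfies the hypothesis and has no valid drawing in any tree.

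In your terms, the leg-only graph contains the cycle $c_1>c_5>c_3>c_7>c_1$. In general, $G$ restricted to each parity class is a tournament, because every same-parity pair is tight, and nothing in the hypothesis makes that tournament transitive. The spine fallback fails for the same reason: no perturbation can make seven rows that sum to zero all positive. Its inductive step was also unsupported. The new rows $(i,n)$ involve $u_n,c_n$ only through $u_n+c_n$, so you face a single scalar with lower bounds from left-tight pairs and upper bounds from right-tight pairs, all fixed by earlier choices.

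What your method does prove is the case $n\le 6$, once you replace acyclicity by linear independence of the tight rows. For $n\le 6$ the tight rows of all even-distance pairs are linearly independent. So some $\mathbf{x}_1$ makes them all equal to $1$, and $\mathbf{x}_0+\varepsilon\mathbf{x}_1$ works.
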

\begin{proof}
We prove that for every vector $\mathbf{y_N}\ge 0$, the sum of the entries of $\mathbf{Y_S}\,\mathbf{y_N}$ is non-positive. 
In particular, this implies that $\mathbf{Y_S}\,\mathbf{y_N}$ cannot be a strictly positive vector.

For every pair $i<j$, let
\(
m_{ij}=(i+j)/2
\)
denote its midpoint. 
Consider a pair of elements $i<j-1$. Summing all the rows of the column $(\mathbf{Y_S})_{ij}$ corresponding to a left–center constraint (see Figure~\ref{fig:B_sctructute}(c)) yields
\begin{equation*}
\label{eq:S1}
\mathbf{S}_{ij}
=-(j-\LC(i,j))+(\LC(i,j)-i)
=2\bigl(\LC(i,j)-m_{ij}\bigr).
\end{equation*}
 Summing now all the rows of the column $(\mathbf{Y_S})_{ji}$ corresponding to a right–center constraint (see Figure~\ref{fig:B_sctructute}(d)) gives
\begin{equation*}
\label{eq:S2}
\mathbf{S}_{ji}
=(j-\RC(j,i))-(\RC(j,i)-i)
=2\bigl(m_{ij}-\RC(j,i)\bigr).
\end{equation*}
Hence, the sum of all the entries of the vector $\mathbf{Y_S}\,\mathbf{y_N}$ is
\begin{equation}\label{eq:sum-rows} 
\sum_{i<j-1}\!\!\!\left( \mathbf{S}_{ij}\,{\mathbf{y_N}}_{ij}+ \mathbf{S}_{ji}\,{\mathbf{y_N}}_{ji}\right) = 2\!\!\sum_{i<j-1}\!\!\!\Big((\LC(j,i) - m_{ij} ){\mathbf{y_N}}_{ij} + ( m_{ij} - \RC(j,i) ){\mathbf{y_N}}_{ji}\Big) \end{equation}

Under the hypothesis of the theorem, we have
\(
\LC(i,j)\le m_{ij}
\,\text{and}\,
m_{ij}\le \RC(j,i)
\)
for every $i<j$, and therefore both coefficients
\(
\LC(i,j)-m_{ij}
\,\text{and}\,
m_{ij}-\RC(j,i)
\)
are non-positive. Since $\mathbf{y_N}\ge 0$, every term in the sum \eqref{eq:sum-rows} is non-positive, and consequently the sum of the entries of $\mathbf{Y_S}\,\mathbf{y_N}$ is non-positive.

It follows that $\mathbf{Y_S}\,\mathbf{y_N}$ cannot be a non-negative vector. Hence, by Lemma~\ref{thm:kernel_characterization}, the strict Robinson space $(S,\dd)$ admits a valid drawing in a caterpillar.
\end{proof}

We now use the kernel characterization given in Lemma~\ref{thm:kernel_characterization} to identify simple structural situations in which the existence of a valid drawing in a caterpillar is guaranteed. In particular, we show that when the matrix $\mathbf{Y_S}$ has very few non-redundant columns, the feasibility problem~\eqref{eq:lpS} becomes highly constrained and cannot admit any nonzero nonnegative solution. This implies, by duality, that the original Robinson space necessarily admits a valid drawing. The next lemma formalizes this idea by proving that if $Y_S$ contains at most one non-redundant column of a type, then the space always has a valid drawing in a caterpillar.

  \begin{theorem}
       \label{thm:single-non-redundant}
Let $(S,\dd)$ be a strict Robinson space. 
If the matrix $\mathbf{Y_S}$ contains fewer than two non-redundant columns of the form $(\mathbf{Y_S})_{ij}$ with $i<j$ or of the form $(\mathbf{Y_S})_{ji}$ with $i<j$, then $(S,\dd)$ admits a valid drawing in a caterpillar.
\end{theorem}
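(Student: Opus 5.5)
Our plan is to use Lemma~\ref{thm:kernel_characterization} and show that \eqref{eq:lpS} is infeasible. First we discard redundant columns. By the definition of redundancy, we may repeatedly set the redundant coordinates of a hypothetical solution $\mathbf{y_N}$ to zero: if \eqref{eq:lpS} is feasible, it has a solution supported only on non-redundant columns. Each elimination step either drops a column with nonpositive entries (type (a),(b) in Figure~\ref{fig:B_sctructute}) or transfers its weight to a dominating column with the same center. We will iterate this along a maximal dominating representative for each center value, so that the process terminates. Hence it suffices to show that no nonzero $\mathbf{y_N}\ge0$ supported on the non-redundant columns satisfies $\mathbf{Y_S}\mathbf{y_N}\ge 0$.

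We read the hypothesis as follows: among the left-center columns $(\mathbf{Y_S})_{ij}$, $i<j$, at most one is non-redundant, or among the right-center columns $(\mathbf{Y_S})_{ji}$ at most one is non-redundant. Suppose the left family has at most one. The key structural fact we will use is the sign pattern of the column types (c) and (d). A left-center column $(\mathbf{Y_S})_{ij}$ has $-1$ entries on the $\vertebras$-block rows corresponding to the interval $[\LC(i,j),j)$ and $+1$ entries on $[i,\LC(i,j))$, appropriately halved via $(\mathbf{B_S}^\top)^{-1}$. A right-center column has the mirrored pattern. We then look at a specific row of $\mathbf{Y_S}$, namely the row of $\mathbf{y_B}$ associated with the consecutive pair $(n-1,n)$ or $(1,2)$, or more generally the extreme row. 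On that row every right-center column contributes a nonpositive entry, and the single left-center column also contributes nonpositively or zero. We will verify this using the explicit form of $\mathbf{T_{n-1}}^{-\top}$, which is an upper triangular matrix of ones, so that row entries are partial sums of the column's pattern.

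Given such a row where all columns are $\le 0$, the condition $\mathbf{Y_S}\mathbf{y_N}\ge0$ forces every column with a strictly negative entry there to carry zero weight. We then argue inductively, peeling off columns: after removing them, pass to the next row and repeat. Alternatively, we will use a weighted sum of rows, with weights chosen as in the proof of Theorem~\ref{thm:center-feasible} but localized, so that every right-center column gets a negative total while the lone left-center column $c$ also gets a negative total. This reduces the problem to showing that a single column $c$ with $y_c>0$ cannot satisfy $(\mathbf{Y_S})_c\, y_c\ge 0$ on its own. That case is immediate, because each type (c)/(d) column has a strictly negative entry, coming from the $-(j-\LC)$ part, which is nonzero since $\LC(i,j)<j$.

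The main obstacle is the middle step: we need a row, or a nonnegative combination of rows, on which all right-center columns are nonpositive and the one left column is strictly negative. Otherwise we must handle the interaction between the single left column and many right columns. We will first try the row-sum functional restricted to the $\vertebras$-rows in a prefix $[1,\LC(i_0,j_0)]$, where $(i_0,j_0)$ indexes the unique left column. We will exploit the adjacency $\LC=\RC-1$ and the monotonicity of the center matrix to control the signs of the right columns there. The symmetric case of at most one non-redundant right column follows by reversing the order of $S$.
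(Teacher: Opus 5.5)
\paragraph{Verdict: genuine gap.} Your reduction to solutions supported on non-redundant columns, and the general ``peeling'' template, are sound. But the step you flag as the main obstacle is never carried out, and the concrete devices you propose for it fail.

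\paragraph{Why the proposed devices fail.} Compute $\mathbf{Y_S}=-(\mathbf{B_S}^\top)^{-1}\mathbf{N_S}^\top$ explicitly; its entries are exactly $0,\pm1$, and nothing is halved. The left column $(i,j)$, with $L=\LC(i,j)$, has $-1$ in the rows of $\mathbf{y_B}$ indexed by $(r,r+1)$ for $L\le r\le j-1$, and $+1$ in the rows indexed by $(r+1,r)$ for $i\le r\le L-1$. The right column $(j,i)$, with $R=\RC(j,i)$, has $+1$ in the rows $(r,r+1)$ for $R\le r\le j-1$, and $-1$ in the rows $(r+1,r)$ for $i\le r\le R-1$. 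So on the $\vertebras$-rows (pairs $(r,r+1)$) every right column is nonnegative, and on the $\patas$-rows (pairs $(r+1,r)$) every left column is nonnegative. This has three consequences for your plan.
\begin{enumerate}
\item Your row sum over the $\vertebras$-rows in the prefix $[1,\LC(i_0,j_0)]$ gives every right column a nonnegative total. In fact, no nonnegative functional supported on $\vertebras$-rows can make a right column negative.
\item The extreme rows fail too. Row $(n-1,n)$ receives $+1$ from every right column with $j=n$. Rows $(1,2)$ and $(n,n-1)$ vanish on every non-redundant column. Row $(2,1)$ receives $+1$ from the left column when $i_0=1$.
\item A nonnegative row weighting giving every column a negative total would finish the proof outright. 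So the announced ``reduction to a single column'' is not a real step.
\end{enumerate}

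\paragraph{The missing idea.} You need to use the two blocks in sequence, together with monotonicity of the center matrix. Let $(i_0,j_0)$ be the unique non-redundant left column and $L_0=\LC(i_0,j_0)$.
\begin{enumerate}
\item On the $\patas$-rows outside $[i_0,L_0-1]$, the left column vanishes and all right columns are $\le 0$. So $\mathbf{Y_S}\mathbf{y_N}\ge 0$ forces every right column with positive weight to satisfy $i\ge i_0$ and $\RC(j,i)\le L_0$.
\item For such a column one must have $j<j_0$. Otherwise monotonicity gives $\LC(i,j)\ge L_0$, and with $\RC(j,i)=\LC(i,j)+1$ this yields $\LC(i,j)\ge L_0\ge \RC(j,i)=\LC(i,j)+1$, a contradiction.
\item Hence in the $\vertebras$-row $(j_0-1,j_0)$ the left column contributes $-1$ and every surviving right column contributes $0$. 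This entry equals $-(\mathbf{y_N})_{i_0j_0}$, which forces that weight to vanish.
\item The remaining right columns are then killed on the $\patas$-rows, where each is nonpositive with at least one $-1$. This step alone handles the case with no non-redundant left column.
\end{enumerate}
This is the paper's proof.

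If you prefer your single-functional route, take weight $M>1$ on the $\patas$-rows outside $[i_0,L_0-1]$, weight $1$ on the $\vertebras$-row $(j_0-1,j_0)$, and weight $\varepsilon$ with $\varepsilon(L_0-i_0)<1$ on the $\patas$-rows inside that interval. Its validity, however, again rests on the inequality $j<j_0$, which your proposal does not contain.
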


\begin{proof}
First, observe that for every solution of (LP-S) there exists a solution where  $\mathbf{Y_S}\,\mathbf{y_N}$ can be written as a linear combination of its non-redundant columns:
\[
\mathbf{Y_S}\,\mathbf{y_N}
=
\sum_{\substack{i<j-1 \\ (i,j)\ \text{non-redundant}}} (\mathbf{Y_S})_{ij}\,(\mathbf{y_N})_{ij}
+
\sum_{\substack{i<j-1 \\ (j,i)\ \text{non-redundant}}} (\mathbf{Y_S})_{ji}\,(\mathbf{y_N})_{ji}.
\]

Without loss of generality, suppose that $\mathbf{Y_S}$ contains fewer than two non-redundant columns of the form $(\mathbf{Y_S})_{ij}$ with $i<j-1$. 
We distinguish two cases.

\paragraph{Case 1: No such columns exist.}
All non-redundant columns are of the form $(\mathbf{Y_S})_{ji}$ with $i<j$ (see Figure~\ref{fig:B_sctructute}(d)). 
In each such column, the rows associated with the variables $\patas_k$, for $k\in\{2,\ldots,n\}$, contain only non-positive entries, and at least one of them is equal to $-1$. 
Hence, for any $\mathbf{y_N}\ge 0$ with $\mathbf{y_N}\neq 0$, the vector $\mathbf{Y_S}\,\mathbf{y_N}$ must contain at least one negative entry.

\paragraph{Case 2: Exactly one such column exists.}
Suppose that $\mathbf{Y_S}$ contains a unique non-redundant column $(\mathbf{Y_S})_{\bar{\imath}\bar{\jmath}}$ indexed by a pair $(\bar{\imath},\bar{\jmath})$ with $\bar{\imath}<\bar{\jmath}$. 
We show that for every non-negative vector $\mathbf{y_N}$, all entries associated with constraints of pairs $(j,i)$ with $i>j$ must be zero, and therefore $\mathbf{Y_S}\,\mathbf{y_N}$ must contain a negative entry.

Let $(i,j)$ be a pair with $i<j$ such that $(\mathbf{y_N})_{ji}>0$. 
Then necessarily
\[
i\ge \bar{\imath}
\qquad\text{and}\qquad
\RC(j,i)\le \LC(\bar{\imath},\bar{\jmath}).
\]
Otherwise, there exists a row $\vertebras_r$ such that
\[
(\mathbf{Y_S})_{\vertebras_r,ji}=-1
\qquad\text{and}\qquad
(\mathbf{Y_S})_{\vertebras_r,\bar{\imath}\bar{\jmath}}=0
\]
(see Figures~\ref{fig:B_sctructute}(c) and~\ref{fig:B_sctructute}(d)),
which would imply
\[
(\mathbf{Y_S}\,\mathbf{y_N})_{\vertebras_r}<0,
\]
a contradiction.

Moreover, we must have $j<\bar{\jmath}$. 
Indeed, if $j\ge \bar{\jmath}$, then together with $i\ge \bar{\imath}$ we obtain
\[
\LC(j,i)<\RC(j,i)\le \LC(\bar{\imath},\bar{\jmath}),
\]
which contradicts the monotonicity of the center matrix.

Hence,
\[
(\mathbf{Y_S})_{\vertebras_{\bar{\jmath}},\bar{\imath}\bar{\jmath}}=-1
\qquad\text{and}\qquad
(\mathbf{Y_S})_{\vertebras_{\bar{\jmath}},ji}=0,
\]
and therefore
\[
(\mathbf{Y_S}\,\mathbf{y_N})_{\vertebras_{\bar{\jmath}}}
=
(\mathbf{Y_S})_{\vertebras_{\bar{\jmath}},\bar{\imath}\bar{\jmath}}\,(\mathbf{y_N})_{\bar{\imath}\bar{\jmath}}
+
\sum_{i<j}(\mathbf{Y_S})_{\vertebras_{\bar{\jmath}},ji}\,(\mathbf{y_N})_{ji}
=
-(\mathbf{y_N})_{\bar{\imath}\bar{\jmath}}<0.
\]
Thus, $\mathbf{Y_S}\,\mathbf{y_N}$ necessarily contains a negative entry.

In both cases, there is no nonzero vector $\mathbf{y_N}\ge 0$ such that $\mathbf{Y_S}\,\mathbf{y_N}\ge 0$. 
By Lemma~\ref{thm:kernel_characterization}, it follows that $(S,\dd)$ admits a valid drawing in a caterpillar.
\end{proof}

Using the reduction presented in Section~\ref{sec:strictrobinsonian}, together with Lemma~\ref{lema: relacion-dibujos-construccion} and the feasibility formulation given in Lemma~\ref{thm:kernel_characterization}, we performed an exhaustive analysis of Robinson spaces on small ground sets. This analysis shows that \emph{all} Robinson spaces on four and on five elements admit a valid drawing in a caterpillar. We emphasize that this result holds not only for strict Robinson spaces, but for Robinson spaces in general. In contrast, in the next theorem we exhibit a strict Robinson space on six elements that does not admit any valid drawing in a caterpillar. Consequently, by Corollary~\ref{thm:caterpillar}, this space does not admit a valid drawing in any tree.

\begin{figure}[t]
    \centering
 \includegraphics[]{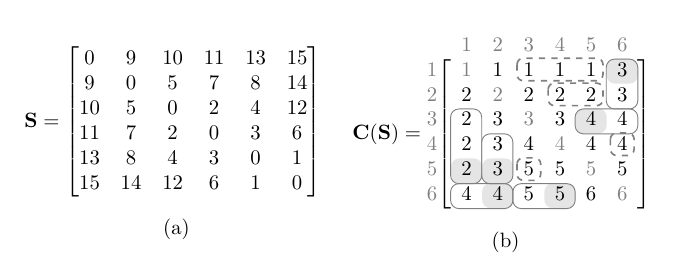}
\caption{ (a) A strict Robinson space $\mathbf{S}$ with no valid drawing in a tree represented by its dissimilarity matrix. (b) Matrix of centers $\mathbf{\centers(S)}$ of the space $\mathbf{S}$. Dashed blocks indicate redundant pairs of the first type. In each solid block, the highlighted pair is the unique non-redundant one and the only one that must be included in the feasibility formulation.
}
\label{fig:novalidtree}
\end{figure}

\begin{theorem}
\label{thm:novalidtree}
There exists a Robinson space on six elements that admits no valid drawing in any tree.
\end{theorem}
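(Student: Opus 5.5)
The plan is to exhibit an explicit strict Robinson space $(S,\dd)$ on $S=\{1,\dots,6\}$, the one in Figure~\ref{fig:novalidtree}, and show that it has no valid drawing in a caterpillar. Corollary~\ref{thm:caterpillar} then implies it has no valid drawing in any tree. The first step is routine. We check that the identity order is compatible, i.e.\ rows increase to the right of the diagonal and columns increase upward towards the diagonal. We also check that all fifteen values are distinct, and we compute the matrix of centers $\mathbf{\centers(S)}$ from the definitions of $\LC$ and $\RC$. Strictness gives $\LC(i,j)=\RC(j,i)-1$, so only one family of values has to be computed.

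Next we reduce the number of constraints using the redundancy arguments of the previous paragraphs. Pairs whose center coincides with an endpoint are redundant of the first type; these are the dashed blocks. Among pairs sharing the same left center, only the one with minimal $(i',j')$ in the domination order survives; symmetrically for right centers, the one with maximal indices survives. This leaves a handful of highlighted non-redundant columns of $\mathbf{Y_S}$. By Theorem~\ref{thm:single-non-redundant}, the space must be chosen so that there are at least two non-redundant columns of each kind, $(\mathbf{Y_S})_{ij}$ and $(\mathbf{Y_S})_{ji}$. Otherwise a drawing would exist. Theorem~\ref{thm:center-feasible} further tells us the space must violate the midpoint condition for some pair, and this guides the choice of example.

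The core step uses Lemma~\ref{thm:kernel_characterization}. We exhibit an explicit certificate $\mathbf{y_N}\ge 0$, $\mathbf{y_N}\neq 0$, supported only on the surviving columns, with $\mathbf{Y_S}\mathbf{y_N}\ge 0$. Equivalently, by Gordan's alternative (Theorem~\ref{thm:gordan}), we give nonnegative multipliers of the inequalities \eqref{eq:centipede_conditions} that sum to the zero functional in the variables $\vertebras_k,\patas_k$. Summing the strict inequalities would then yield $0>0$. Concretely, we take the few retained inequalities (involving $\LC$ and $\RC$ of the highlighted pairs) together with consecutive-pair inequalities $\vertebras_{i+1}-\vertebras_i\pm(\patas_{i+1}-\patas_i)>0$ as needed. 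We then solve the small linear system to find positive integer weights that cancel every $\vertebras_k$ and $\patas_k$. Verifying the certificate is a direct computation, and we present it explicitly so the reader can check $\mathbf{Y_S}\mathbf{y_N}\ge 0$ entrywise.

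The main obstacle is finding the example itself. It must violate the sufficient conditions of Theorems~\ref{thm:center-feasible} and~\ref{thm:single-non-redundant}, and the retained constraints must be genuinely contradictory; the exhaustive search shows nothing with five or fewer elements works. We would try first to find the example by searching, guided by the LP \eqref{eq:lpS}, over center matrices on six points that are monotone with adjacent centers. For a candidate center matrix we realize it as a strict Robinson dissimilarity by assigning increasing ranks consistent with the required comparisons. Once found, the certificate makes the proof fully checkable.
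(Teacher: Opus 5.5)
Your proposal follows the paper's proof essentially step for step: take the strict six-point space of Figure~\ref{fig:novalidtree}, discard the redundant columns of $\mathbf{Y_S}$, exhibit a nonzero $\mathbf{y_N}\ge 0$ with $\mathbf{Y_S}\mathbf{y_N}\ge 0$, and conclude via Lemma~\ref{thm:kernel_characterization} (or directly, by summing the strict inequalities to get $0>0$) and then Corollary~\ref{thm:caterpillar}. What your write-up leaves open is the certificate itself, which carries the whole proof; the paper gives it as $\mathbf{y'_N}=(1,1,1,0,1,0)^\top$ on the six non-redundant columns, yielding $\mathbf{Y'_S}\mathbf{y'_N}=(0,1,0,\dots,0)^\top$, i.e.\ four center inequalities plus a single consecutive-pair inequality that sum to the zero functional.
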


\begin{proof}
To prove the result, we exhibit a strict Robinson space $S$ on six elements that admits no valid drawing in any tree. 
Figure~\ref{fig:novalidtree} shows the dissimilarity matrix of $S$ together with its corresponding center matrix, where the non-redundant constraints are highlighted.

Therefore, it is sufficient to consider only the columns of $\mathbf{Y_S}$ associated with these non-redundant constraints. 
Denoting by $\mathbf{Y'_S}$ the reduced matrix obtained by keeping only such columns, we obtain the matrix shown in Figure \ref{fig:ys-contraejemplo}.

\begin{figure}[t]
    \centering
    \includegraphics[]{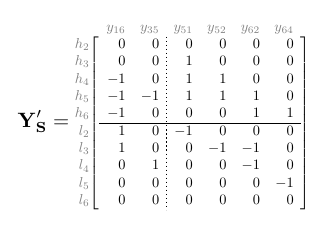}
    \caption{Matrix $\mathbf{Y'_S}$ of the strict Robinson space shown in Figure \ref{fig:novalidtree}.}
    \label{fig:ys-contraejemplo}
\end{figure}

Now define
\[
\mathbf{y'_N}=(1,1,1,0,1,0)^\top .
\]
A direct computation shows that
\[
\mathbf{Y'_S}\,\mathbf{y'_N}
=
(0,1,0,0,0,0,0,0,0,0)^\top,
\]
which is a nonzero non-negative vector. 
Therefore, by Lemma~\ref{thm:kernel_characterization}, the feasibility problem~\eqref{eq:lpS} admits a solution, and consequently the strict Robinson space $S$ does not admit a valid drawing in a caterpillar, and by Corollary \ref{thm:caterpillar} in any tree.
\end{proof}

\section{Conclusions}
\label{sec:conclusions}
In this work, we challenge the widespread belief that Robinson spaces are inherently linear from a metric point of view, a perception largely rooted in their combinatorial linear structure. We provide an algorithmic framework that decides whether a given Robinson space admits such a linear (tree-based) representation. Moreover, we exhibit an explicit example of a Robinson space that not only fails to admit a linear representation, but in fact does not admit any valid drawing in a real tree. This shows that the class of Robinson spaces is geometrically richer and more complex than previously expected.

Theorem~\ref{thm:novalidtree} proves that  there exist Robinson spaces that admit no valid drawing in any tree.
This negative result naturally opens a broader question. Since trees correspond to low-dimensional geometric structures, it is natural to ask whether Robinson spaces always admit valid drawings in higher-dimensional Euclidean spaces. More precisely, does there exist an integer $\ell$ such that every Robinson space on $n$ elements admits a valid drawing in $\mathbb{R}^\ell$? If so, is $\ell$ a constant, or must it depend on $n$? Answering this question would provide a deeper geometric understanding of Robinson spaces and suggests a promising direction for future research.

\section*{Declarations}

\paragraph{Funding} Mauricio Soto-Gomez was supported by National Center for Gene Therapy and Drugs Based on RNA Technology—MUR (Project no. CN 00000041) funded by NextGeneration EU program.

\paragraph{Conflict of interest} The authors have no competing interests to declare that are relevant to the content of this article.

\paragraph{Code availability} All the code used for the experimental results can be found at \url{https://github.com/Kurufo/Robinson_valid_drawing}.

\bibliography{sn-bibliography}
\end{document}